\newcommand{\N}{{\mathbb N}}
\newcommand{\R}{{\mathbb R}}
\newtheorem{theorem}{Theorem}[section]
\newtheorem{corollary}[theorem]{Corollary}
\newtheorem{remark}[theorem]{Remark}
\newtheorem{hypothesis}[theorem]{Hypothesis}
\newtheorem{claim}[theorem]{Claim}
\numberwithin{equation}{section}
\begin{document}

\title{\bf\Large A note on bifurcation theorems of Rabinowitz type
\footnotetext{\hspace{-0.35cm} 2020
{\it Mathematics Subject Classification}.
Primary: 58E07, 58E09.  Secondary: 58E05, 47J15.
\endgraf
{\it Key words and phrases.}
Rabinowitz theorem, bifurcation,  splitting theorem.
}}
\date{}
\author{Guangcun Lu\footnote{
E-mail: \texttt{gclu@bnu.edu.cn}/{February 20, 2023(first), August 24, 2023(revised)}.}}

\maketitle

\vspace{-0.7cm}

\begin{center}
\begin{minipage}{13cm}
{\small {\bf Abstract}\quad
In this note we refine the alternativity in some bifurcation theorems of Rabinowitz type,
and then improve a few of results in Lu (2022) \cite{Lu8}.
}
\end{minipage}
\end{center}
	
\vspace{0.2cm}
	
%


\section{Introduction}\label{app:App.1}


Many  bifurcation problems in infinite dimensional Banach spaces
 can be reduced to those in finite dimension spaces via various finitely dimension reductions.
For example, the proof of \cite[Theorem~0.2]{Rab77} by Rabinowitz may be reduced
to a finite-dimensional bifurcation result, which is a special case of
the following theorem.

\begin{theorem}[\hbox{\cite{IoSch, Can, CorH}}]
\label{th:A.1}
 Let $X$ be a finite dimensional normed space, let $\delta>0$, $\epsilon>0$, $\lambda^\ast\in\mathbb{R}$ and
for every $\lambda\in [\lambda^\ast-\delta, \lambda^\ast+\delta]$, let
$f_\lambda:B_X(0,\epsilon)\to\mathbb{R}$ be a function of class $C^1$.
Assume that
\begin{description}
\item[a)] the functions $\{(\lambda,u)\to f_\lambda(u)\}$ and
$\{(\lambda,u)\to f'_\lambda(u)\}$  are continuous on
$[\lambda^\ast-\delta, \lambda^\ast+\delta]\times B_X(0,\epsilon)$;
\item[b)] $u=0$ is a critical point of $f_{\lambda^\ast}$;
\item[c)] $f_\lambda$ has a
local minimum (resp. maximum) at zero for every
$\lambda\in (\lambda^\ast,\lambda^\ast+\delta]$ and
a local maximum (resp. minimum) at
zero for every $\lambda\in [\lambda^\ast-\delta, \lambda^\ast)$.
\end{description}
Then  one at least of the following assertions holds:
\begin{description}
\item[i)]  $u=0$ is not an isolated critical point of $f_{\lambda^\ast}$.
\item[ii)] $u=0$ is an isolated critical point of $f_{\lambda^\ast}$ which is a saddle point,
and for every $\lambda\ne\lambda^\ast$ in a neighborhood of $\lambda^\ast$ there is a nontrivial critical point of
$f_\lambda$ converging to zero as $\lambda\to\lambda^\ast$.
\item[iii)] $u=0$ is an isolated critical point of $f_{\lambda^\ast}$ which is a local minimizer (resp. maximizer),
and for a small one-sided neighborhood $\Lambda^0$ of $\lambda^\ast$ such that $u=0$ is a local maximizer (resp. minimizer) of $f_\lambda$ for each $\lambda\in\Lambda^0\setminus\{\lambda^\ast\}$,
$f_\lambda$ with $\lambda\in\Lambda^0\setminus\{\lambda^\ast\}$ has two distinct nontrivial critical points
converging to zero as $\lambda\to\lambda^\ast$.
\end{description}
In particular,  $(\lambda^\ast, 0)\in [\lambda^\ast-\delta, \lambda^\ast+\delta]\times B_X(0,\epsilon)$
is a bifurcation point of $f'_\lambda(u)=0$.
\end{theorem}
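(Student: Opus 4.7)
The plan is to argue by contradiction, tracking the Morse-theoretic critical groups $C_\ast(f_\lambda,0;\F)$ as $\lambda$ crosses $\lambda^\ast$. Suppose alternative (i) fails, so there exists $r\in(0,\epsilon)$ with $0$ the only critical point of $f_{\lambda^\ast}$ in $\overline{B_X(0,r)}$. Hypothesis (a) and compactness of $\partial B_X(0,r)$ yield $\delta'\in(0,\delta]$ such that $f'_\lambda\ne 0$ on $\partial B_X(0,r)$ for all $\lambda\in[\lambda^\ast-\delta',\lambda^\ast+\delta']$, and such that any critical points of $f_{\lambda_k}$ in $\overline{B_X(0,r)}$ along a sequence $\lambda_k\to\lambda^\ast$ must converge to $0$. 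Writing $n=\dim X$ and fixing a coefficient field $\F$, the standard local computations give $C_q(f_\lambda,0;\F)\cong\delta_{q,0}\F$ when $0$ is an isolated local minimum of $f_\lambda$, and $\cong\delta_{q,n}\F$ when $0$ is an isolated local maximum; moreover these groups are locally constant in $\lambda$ along continuous families of isolated critical points of continuous families of $C^1$ functions.

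For alternative (ii), suppose $0$ is an isolated critical point of $f_{\lambda^\ast}$ which is neither a local minimum nor a local maximum. Then $C_q(f_{\lambda^\ast},0;\F)$ is neither $\delta_{q,0}\F$ nor $\delta_{q,n}\F$. On the side of $\lambda^\ast$ on which $f_\lambda$ has a local minimum at $0$, if $0$ remained the only critical point of $f_\lambda$ in $\overline{B_X(0,r)}$ along some $\lambda_k\to\lambda^\ast$, continuity of critical groups would force $C_q(f_{\lambda^\ast},0;\F)=\delta_{q,0}\F$, a contradiction. Hence nontrivial critical points of $f_{\lambda_k}$ exist and, by the preliminary step, converge to $0$; the symmetric argument handles the other side.

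For alternative (iii), assume $0$ is an isolated local minimum of $f_{\lambda^\ast}$ (the maximum case is symmetric), and let $\Lambda^0$ be the deleted one-sided neighborhood of $\lambda^\ast$ on which $0$ is a local maximum of $f_\lambda$. Shrinking $r$, isolation of the minimum gives $\eta>0$ with $f_{\lambda^\ast}(u)\ge f_{\lambda^\ast}(0)+\eta$ on $\partial B_X(0,r)$; by continuity, $f_\lambda(u)\ge f_\lambda(0)+\eta/2$ on $\partial B_X(0,r)$ for $\lambda\in\Lambda^0$ close to $\lambda^\ast$. The first critical point $u_\lambda^1$ is taken to be the global minimizer of $f_\lambda$ on $\overline{B_X(0,r)}$: the boundary gap rules out $u_\lambda^1\in\partial B_X(0,r)$, and the local-maximum property at $0$ gives $f_\lambda(u_\lambda^1)<f_\lambda(0)$, so $u_\lambda^1\ne 0$. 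For the second, I would modify $f_\lambda$ in a collar of $\partial B_X(0,r)$ to make the negative pseudo-gradient flow inward on $\partial B_X(0,r)$ (leaving $0$ and $u_\lambda^1$ unchanged as critical points), making $\overline{B_X(0,r)}$ positively invariant; the Euler--Poincar\'e identity on the contractible ball then gives $1=\chi(\overline{B_X(0,r)})=(-1)^n+1+\sum_x\chi(C_\ast(f_\lambda,x;\F))$, where the sum runs over critical points $x\in B_X(0,r)\setminus\{0,u_\lambda^1\}$, so $\sum_x\chi=-(-1)^n\ne 0$, forcing at least one such $x$, our $u_\lambda^2$.

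The hardest step is the production of the second critical point in alternative (iii): one must set up the pseudo-gradient so that $\overline{B_X(0,r)}$ is invariant without creating or destroying interior critical points, and one must apply Morse theory to a merely $C^1$ functional (so the critical groups are understood via the Gromoll--Meyer splitting rather than the Morse lemma). A cleaner but equivalent route is a minimax/linking argument over paths in $\overline{B_X(0,r)}$ joining $u_\lambda^1$ to $\partial B_X(0,r)$, in which the local maximum at $0$ provides the linking structure and the minimax level lies strictly above $\max\{f_\lambda(u_\lambda^1),f_\lambda(0)\}$; either approach yields $u_\lambda^2$ inside $B_X(0,r)$ with $u_\lambda^2\to 0$ as $\lambda\to\lambda^\ast$.
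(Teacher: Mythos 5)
Your overall architecture---dispose of (i), split into the saddle and extremum cases, and count critical points via local Morse data plus a minimax step---matches the strategy of the paper's Theorem~2.1 (the paper does not reprove Theorem~1.1 itself, which it cites to Ioffe--Schwartzman, Canino and Corvellec--Hantoute, but proves that strengthening in detail). However, two of your steps do not go through as written. In the saddle case you deduce $C_q(f_{\lambda^\ast},0;\F)=\delta_{q,0}\F$ from the existence of a \emph{sequence} $\lambda_k\to\lambda^\ast$ along which $0$ is the only critical point of $f_{\lambda_k}$ in $\overline{B_X(0,r)}$, by "continuity of critical groups". The stability theorem for critical groups requires $0$ to be a \emph{uniformly isolated} critical point along a \emph{connected} family of functionals joining $f_{\lambda_k}$ to $f_{\lambda^\ast}$; negating (ii) only gives isolation at the parameters $\lambda_k$ and at $\lambda^\ast$, and neither the given family at intermediate $\lambda$ (where extra critical points are exactly what one expects) nor the linear interpolation $(1-t)f_{\lambda^\ast}+tf_{\lambda_k}$ is known to keep $0$ uniformly isolated near $0$. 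This is precisely where the cited sources do real work, and where the paper's own Step~2 defers to the second paragraph of the proof of Corvellec--Hantoute's Theorem~4.2; your one-line appeal to continuity does not substitute for that argument.

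In case (iii), the production of $u^2_\lambda$ is also flawed. Modifying $f_\lambda$ in a collar of $\partial B_X(0,r)$ so that the negative pseudo-gradient points inward will in general create new critical points of the modified function inside the collar (wherever the added radial term cancels $f'_\lambda$); these spurious points enter your Euler--Poincar\'e identity and can absorb the defect $-(-1)^n$, so no critical point of the original $f_\lambda$ is forced. The paper's device is to replace the ball by the connected component $W_\varepsilon$ of $\{f_{\lambda^\ast}\le\varepsilon\}$ containing $0$: it is compact, contractible, and positively invariant under the negative pseudo-gradient flows of all nearby $f_\lambda$ \emph{without any modification}, so the Morse equality $\sum_x\chi(C_\ast(f_\lambda,x))=1$ involves only genuine critical points. (Alternatively, a Brouwer degree count $\sum_x{\rm ind}(f'_\lambda,x)=\deg(f'_{\lambda^\ast},B_X(0,r),0)=1$ needs no invariance at all.) Your fallback minimax over paths from $u^1_\lambda$ to $\partial B_X(0,r)$ suffers from the same defect: the deformation showing the minimax level is critical must keep the region invariant, and a moving boundary endpoint is not a standard minimax class. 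Finally, the boundary gap $f_{\lambda^\ast}\ge f_{\lambda^\ast}(0)+\eta$ on $\partial B_X(0,r)$ uses that the isolated local minimum is \emph{strict}, which itself requires the small pseudo-gradient argument of Claim~A in the paper's proof rather than "isolation" alone.
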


This result was proved by Canino \cite[Theorem~5.1]{Can} as a corollary of \cite[Theorem~2]{IoSch}
under the additional assumption that $0\in X$ as a local minimizer (or maximizer) in the assumption c)
is isolated. The above slightly weaker version is a special case of a generalization to a family of continuous functionals on
a Banach space of infinite dimension by Corvellec and Hantoute \cite[Theorem~4.2]{CorH}.

Chang \cite{Ch2} (see also \cite[Theorem II.5.5]{Ch})
gave a proof of  Morse theory  for Rabinowitz bifurcation theorem (\cite[Theorem~0.2]{Rab77}).
Motivated by the proof of \cite[Theorem~3.3]{Wa}, we can refine Chang's arguments with the method of the mountain pass theorem
to obtain the following slightly strengthened version of Theorem~\ref{th:A.1}.

\begin{theorem}\label{th:A.1.1}
Under the assumptions of Theorem~\ref{th:A.1} the conclusion iii) can be changed into:
\begin{description}
\item[iii*)] $u=0$ is an isolated critical point of $f_{\lambda^\ast}$ which is a local minimizer (resp. maximizer),
and for any given  neighborhood $W$ of $0$ in $B_X(0,\epsilon)$ there is an one-sided (right or left) neighborhood
$\Lambda^0$ of $\lambda^\ast$ such that $u=0$ is a local maximizer (resp. minimizer) of $f_\lambda$ for each $\lambda\in\Lambda^0\setminus\{\lambda^\ast\}$,
and that every $f_\lambda$, $\lambda\in\Lambda^0\setminus\{\lambda^\ast\}$,
 has  at least two  nontrivial  critical points in $W$,  saying $p_1, p_2$.
Moreover, $p_1$ and $p_2$ can be chosen
such that $f_\lambda(p_2)>f_\lambda(p_1)$ (resp. $f_\lambda(p_2)<f_\lambda(p_1)$) and $p_1$ is a strict local minimizer
(resp. maximizer) in $W$ provided that $f_\lambda$ has only finitely many  critical points in $W$ and $\dim X>1$.
\end{description}
\end{theorem}

\begin{remark}\label{rm:A.4}
{\rm  Rabinowitz's proof on the page 416 of \cite{Rab77} cannot yield two distinct nontrivial critical points
with distinct critical values in the case of $b_1=b_2=b$.
Let us see the arguments of \cite[pages 157-158]{CorH}. In the case of $c_\lambda=m_\lambda$,
$f_\lambda$ has the same value at the two critical points $u_\lambda$ and $v_\lambda$. Actually, since
$u_\lambda, v_\lambda\in{B}_{\frac{1}{k}}(0)$
are local maxima of $f_\lambda$,  using the mountain pass theorem as in our arguments below,
we may produce a new critical point $w_\lambda$ such that $f_\lambda(w_\lambda)<f_\lambda(u_\lambda)=
f_\lambda(v_\lambda)$. Finally, it seems unclear to assure that the third critical point
obtained by the proof of \cite[Theorem II.5.4]{Ch} cannot sit in $f^{-1}(c_1)$.
}
\end{remark}

In Theorem~\ref{th:A.1}, if all $f_\lambda$ are even, the case ii) cannot occur.
In the case iii), though according to the proof method of \cite[Theorem~2.2]{Rab77}
it  may be proved that $f_\lambda$  possesses  at least $\dim X$ distinct pairs of nontrivial critical points,
we cannot affirm that two distinct pairs of them have distinct critical values.

Fortunately, Wang \cite[Theorems~3.1, 3.2]{Wa} generalized the three critical point theorem
to the case of three critical orbits. Because of this we can give a more general
version of Theorem~\ref{th:A.1.1} in Section~\ref{app:App.2}, Theorem~\ref{th:A.3}.
Using the latter and Theorem~\ref{th:A.1.1} we may improve  the  celebrated  Rabinowitz bifurcation theorem \cite[Theorem~0.2]{Rab77}
and some generalizations of it, for example, those in \cite{Lu8}.
 A few of them will be given in Section~\ref{sec:3}.

\section{A finite-dimensional bifurcation theorem of Rabinowitz  type}\label{app:App.2}

The following theorem is, more or less, contained in the proofs of related results in \cite{Ch, Wa} and \cite{IoSch, CorH}.

\begin{theorem}\label{th:A.3}
Under the assumptions of Theorem~\ref{th:A.1}, let $\dim X=n$ and let an open neighborhood
$M$ of $0$ in $B_X(0,\epsilon)$ be equipped with a continuous\footnote{This action must be $C^1$ because of
a result in \cite{BoMon45, CherMar70}.} action of a compact Lie group $G$
via $C^1$-diffeomorphisms on $M$.
Suppose that $0\in{\rm Fix}_G=\{x\in M\,|\, gx=x\;\forall g\in G\}$ and that
 all $f_\lambda|_M$ are $G$-invariant.
 Then  one at least of the following assertions holds:
\begin{description}
\item[I)] $u=0$ is not an isolated critical orbit of $f_{\lambda^\ast}$.
\item[II)] $u=0$ is an isolated critical point of $f_{\lambda^\ast}$ which is a saddle point,
and for every $\lambda\ne\lambda^\ast$ in a neighborhood of $\lambda^\ast$ there is a nontrivial critical orbit of
$f_\lambda$ converging to zero as $\lambda\to\lambda^\ast$.

 \item[III)] $u=0$ is an isolated critical point of $f_{\lambda^\ast}$ which is a local minimizer (resp. maximizer),
and if $\Lambda^0$ is an one-sided neighborhood  of $\lambda^\ast$
such that $u=0$ is a local maximizer (resp. minimizer) of $f_\lambda$ for any $\lambda\in\Lambda^0\setminus\{\lambda^\ast\}$,
 then for any given small $G$-invariant neighborhood $W$ of $0$
 in $M$ there exists a $G$-invariant compact contractible neighborhood $W^\circ\subset W$ of $0$
 and an one-sided neighborhood $\Lambda^{00}\subset\Lambda^0$ of $\lambda^\ast$
  such that for every $\lambda\in\Lambda^{00}\setminus\{\lambda^\ast\}$,
 besides an orbit $\mathcal{O}_1=G(p)$ on which $f_\lambda|_{W^\circ}$ attains the minimum (resp. maximum),
(such orbits may not be unique), $f_\lambda$ has also  at least one  nontrivial critical orbit sitting in $W^\circ$
provided that the Euler-Poincar\'e characteristic of $\mathcal{O}_1$,
\begin{equation}\label{e:III}
\chi(\mathcal{O}_1)\ne 1-(-1)^n.
\end{equation}
Moreover, if $f_\lambda$, $\lambda\in\Lambda^{00}\setminus\{\lambda^\ast\}$, has only finitely many critical orbits in $W^\circ$,
and  $\mathcal{O}_1=G(p)$ is
an orbit  on which $f_\lambda|_{W^\circ}$ attains the minimum (resp. maximum),
then $f_\lambda$  has also  at least one  nontrivial critical orbit $\mathcal{O}_2$ sitting in $W^\circ$
such that $f_\lambda(\mathcal{O}_2)>f_\lambda(\mathcal{O}_1)$ (resp. $f_\lambda(\mathcal{O}_2)<f_\lambda(\mathcal{O}_1)$) provided that
 $\dim X=n>1$ and one of the following three conditions holds:
\begin{description}
\item[(III-1)]  $\dim \mathcal{O}_1 =0$.
\item[(III-2)]  $1\le\dim\mathcal{O}_1\le n-2$.
\item[(III-3)]   $1\le \dim \mathcal{O}_1=n-1$, either $\mathcal{O}_1$ is non-connected or
$\mathcal{O}_1$ is connected and $H_r(\mathcal{O}_1, \mathbb{Z}_2)\ne H_r(S^{n-1}, \mathbb{Z}_2)$ for some $0\le r\le n-1$.
\end{description}
\end{description}
\end{theorem}

When $\dim \mathcal{O}_1>0$, since $\mathcal{O}_1$ is a compact submanifold of $X$ without boundary
we have $\dim\mathcal{O}_1<n=\dim X$. In the case (III-2), $n>2$.
If the second case in (III-3) occurs we have also $n>2$ because
$H_{n-1}(\mathcal{O}_1,\mathbb{Z}_2)= \mathbb{Z}_2= H_{n-1}(S^{n-1},\mathbb{Z}_2)$
and $H_{0}(\mathcal{O}_1,\mathbb{Z}_2)= \mathbb{Z}_2= H_{0}(S^{n-1},\mathbb{Z}_2)$
imply $1\le r<n-1$.

If $G$ is a trivial group, then (III-1) is clearly satisfied, and hence
Theorem~\ref{th:A.3} includes Theorem~\ref{th:A.1.1}.
(Indeed, if $n>1$ this is true. In the case of $n=1$, the condition (2.1) is satisfied because  $1-(-1)^n=2$ and $\chi(\{p\})=1$
for any single point $p$.)

 Similarly, if the Banach space $X$ in \cite[Theorem~4.2]{CorH}
is of finite dimension,  a corresponding refinement version may be proved with the Morse theory \cite{Cor95, Ci} and
the mountain pass theorem \cite{DegM94, Kat94} for continuous functions on metric spaces.

\begin{proof}[\bf Proof of Theorem~\ref{th:A.3}]
Since $X$ has finite dimension, any two norms on it are equivalent.
Fixing an inner product on $X$ and  making an average by
means of Haar measure over $G$ we may obtain a
$G$-invariant inner product.
Therefore from now on we can assume that $B_X(0,\epsilon)$
is the ball with respect to the associated norm with the $G$-invariant inner product.
Moreover, replacing $f_\lambda$ by $f_\lambda-f_\lambda(0)$, we may assume $f_\lambda(0)=0$ for all $\lambda\in\Lambda$.

\noindent{\bf Step~1}(\textsf{Suppose that (I) does not hold}).
 Since $0\in{\rm Fix}_G$,  $u=0$ is  an isolated critical orbit (and so an isolated critical point) of $f_{\lambda^\ast}$.
By the classification of critical points of \cite[page 136]{Ja}, there exist only the following three cases:\\
{\bf Case 1}. $0\in X$ is a local minimum of $f_{\lambda^\ast}$;\\
{\bf Case 2}. $0\in X$ is a proper local maximum of $f_{\lambda^\ast}$, i.e., it is a  local maximizer of $f_{\lambda^\ast}$
and $0$ belongs to the closure of $\{f_{\lambda^\ast}<0\}$;\\
{\bf Case 3}. $0\in X$ is a saddle point of $f_{\lambda^\ast}$, i.e., $f_{\lambda^\ast}$
 takes both positive and negative values in every neighborhood of $0$.

\noindent{\bf Claim A}. {\it $u=0$ must be a strict local
minimizer (resp. maximizer) of $f_{\lambda^\ast}$ in Case 1 (resp. Case 2)}.

Indeed,  in Case 1 we may take a small neighborhood $U$ of $0$ containing $0$ as a unique critical point
of $f_{\lambda^\ast}$.
Let $\mathscr{V}_{\lambda^\ast}$ be a $C^{1-0}$ pseudo-gradient vector field of $f_{\lambda^\ast}$ on $U$.
If there exists a sequence $(z_k)\subset U\setminus\{0\}$ converging to $0$ such that
$f_{\lambda^\ast}(z_k)\equiv f_{\lambda^\ast}(0)$ for all $k$,
since $\mathscr{V}_{\lambda^\ast}(z)\ne 0$ for $z\ne U\setminus\{0\}$, moving $z_k$ along the flow of
$-\mathscr{V}_{\lambda^\ast}$ we may obtain a point $z_k'\in U$ near $z_k$
such that $f_{\lambda^\ast}(z'_k)<f_{\lambda^\ast}(z_k)=f_{\lambda^\ast}(0)$,
which contradicts the assumption that $0$ is a minimizer of $f_{\lambda^\ast}$ in $U$.
A similar proof may be completed in Case 2. Claim A is proved.

 \noindent{\bf Step~2} (\textsf{Suppose that (II) does not hold either}).  Then \\
 $\bullet$ either $0\in X$ is not a saddle point of $f_{\lambda^\ast}$ (and hence Case 3 cannot occur),\\
$\bullet$  or there exists $r\in (0,\epsilon)$ and a sequence
$\lambda_k\to \lambda^\ast$ in either $[\lambda^\ast-\delta, \lambda^\ast)$ or
 $(\lambda^\ast, \lambda^\ast+\delta]$  such that $0$ is the only
critical point of each $f_{\lambda_k}$ in $Cl(B_X(0,r))$.
Of course, by Step~1 we may also assume that $0$ is the only
critical point of $f_{\lambda^\ast}$ in $Cl(B_X(0,r))$.
Since $Cl(B_X(0,r))$ is compact and the weak slope $|df_\lambda|(u)$
of $f_\lambda$ at $u\in B_X(0,\epsilon)$ is equal to $\|f'_\lambda(u)\|$ (cf. \cite[page 1053, line 3]{Cor95}),
we may deduce that $u=0$ is  a strict local either
minimizer or maximizer of $f_{\lambda^\ast}$  as in the second paragraph of the proof of \cite[Theorem~4.2]{CorH},
and so Case 3 cannot occur again.

Therefore we must have Case 1 or Case 2, that is, $u=0$ is either a strict local
minimizer of $f_{\lambda^\ast}$ or a strict local maximizer of $f_{\lambda^\ast}$.
By considering $-f_\lambda$ we only need to study Case 1.

\noindent{\bf Step~3} (\textsf{Assume that $0$ is a strict local  minimizer  of $f_{\lambda^\ast}$}).

  \noindent{\bf Claim B}. {\it For a given small neighborhood $W$ of $0$ in $M$ there exists
  $\varepsilon>0$ such that the connected component
$W_\varepsilon$ of $\{u\in M\,|\, f_{\lambda^\ast}(u)\le\varepsilon\}$ containing $0$
is a compact neighborhood of $0$ contained in $W$ and that there are no
other nonzero critical points of $f_{\lambda^\ast}$ in $W_\varepsilon$.}

In fact, since $0$ is a strict local  minimizer  of $f_{\lambda^\ast}$ we have  $0<\epsilon'<\epsilon$ such that
$0$ is a unique minimizer  of $f_{\lambda^\ast}$ in $Cl(B_X(0,\epsilon'))$ and that
$Cl(B_X(0,\epsilon'))\subset W$ and
$\tau:=\min\{f_{\lambda^\ast}(u)\,|\,u\in \partial B_X(0,\epsilon')\}>0$.
Then $W_\varepsilon$ and $\partial B_X(0,\epsilon')$ are disjoint for any $0<\varepsilon<\tau$.
If there exists a point $v\in W_\varepsilon\setminus B_X(0,\epsilon')$,
since $W_\varepsilon$ is path connected in $M$ we have a path $\mathfrak{P}$ from $v$ to $0$ in $W_\varepsilon$.
Clearly, $\mathfrak{P}$ is interesting with $\partial B_X(0,\epsilon')$ at some $u$ and therefore we arrive at a contradiction
because $\tau\le f_{\lambda^\ast}(u)\le\varepsilon$. Hence $W_\varepsilon\subset B_X(0,\epsilon')$.
Let $(w_k)\subset W_\varepsilon$ converge to $w$. Then $f_{\lambda^\ast}(w)\le\varepsilon$,
$w\in Cl(B_X(0,\epsilon'))\subset M$ and so $w\in W_\varepsilon$. These show that
$W_\varepsilon$ is closed in $Cl(B_X(0,\epsilon'))$ and so compact in $M$ (and in $X$).
Note that $\epsilon'>0$ may be arbitrarily small and that $\tau\to 0$ as $\epsilon'\to 0$.
We conclude that $\{W_\varepsilon\,|\, 0<\varepsilon<\tau\}$ forms a neighborhood base of $0$. Claim B is proved.

Note that $0\in{\rm Fix}_G$ implies $W_\varepsilon$ to be $G$-invariant.
(Indeed, for any $g\in G$, since $g\cdot 0=0$, $g\cdot W_\varepsilon$ is also a connected compact neighborhood of $0$ contained in
$\{u\in M\,|\, f_{\lambda^\ast}(u)\le\varepsilon\}$. Then
$g\cdot W_\varepsilon\subset W_\varepsilon$. Replacing $g$ by $g^{-1}$ we get
 $g^{-1}\cdot W_\varepsilon\subset W_\varepsilon$ and so $g\cdot W_\varepsilon=W_\varepsilon$.)
Clearly, we may require that $\varepsilon$ is a regular value of $f_{\lambda^\ast}|_M$
so that\textsf{ $\partial W_\varepsilon$ is a $C^1$ submanifold.}
($W^\circ$ in III) may be chosen as $W_\varepsilon$ in the present case.)

By the assumption a) of Theorem~\ref{th:A.1}, the function $(\lambda,z)\mapsto
Df_\lambda(z)$ is continuous on
$[\lambda^\ast-\delta, \lambda^\ast+\delta]\times B_X(0,\epsilon)$.
It follows that
\begin{eqnarray*}
R_{\delta,\epsilon}:
&=&\{(\lambda, z)\in (\lambda^\ast-\delta, \lambda^\ast+\delta)\times B_X(0,\epsilon)\,|\,z\in B_X(0,\epsilon)\setminus
K(f_\lambda)\}
\end{eqnarray*}
is an open subset in $[\lambda^\ast-\delta, \lambda^\ast+\delta]\times B_X(0,\epsilon)$, where
$K(f_\lambda)$ denotes the critical set of $f_\lambda$. By \cite[Lemma~5.2]{Lu8}
there exists a $C^\infty$ map $R_{\delta,\epsilon}\to X,\;(\lambda,z)\mapsto\mathscr{V}_\lambda(z)$,
such that for each $\lambda\in(\lambda^\ast-\delta, \lambda^\ast+ \delta)$ the map
$\mathscr{V}_\lambda: B_X(0,\epsilon)\setminus
K(f_\lambda)\to X$  satisfies
\begin{equation}\label{e:A.1}
\|\mathscr{V}_\lambda(z)\|\le 2\|Df_{\lambda}(z)\|\quad\hbox{and}\quad
\langle Df_{\lambda}(z), \mathscr{V}_\lambda(z)\rangle\ge
\|Df_{\lambda}(z)\|^2
\end{equation}
for all $z\in B_X(0,\epsilon)\setminus K(f_\lambda)$, i.e., $\mathscr{V}_\lambda$ is a pseudo-gradient vector field of
$f_\lambda$ in Palais' sense. Denote by $\varphi_\lambda^t$ the flow of $-\mathscr{V}_\lambda$.
Then $W_\varepsilon$ is invariant under $\varphi^t_{\lambda^\ast}$, and so contractible. The latter claim
leads to $\chi(W_\varepsilon)=1$. (Here $\chi(W_\varepsilon)$ denotes the Euler-Poincar\'e characteristic of $W_\varepsilon$.)
   Since $\inf\{\|Df_{\lambda^\ast}(z)\|\,|\,z\in\partial W_\varepsilon\}>0$
implies $\inf\{\|\mathscr{V}_{\lambda^\ast}(z)\|\,|\,z\in\partial W_\varepsilon\}>0$ and
the map $R_{\delta,\epsilon}\ni (\lambda,z)\mapsto\mathscr{V}_\lambda(z)\in X$ is continuous,
we have $0<\delta_0<\delta$ such that
\begin{equation}\label{e:A.2}
\inf\{\langle Df_{\lambda^\ast}(z), \mathscr{V}_\lambda(z)\rangle\,|\,z\in\partial W_\varepsilon,\,\lambda^\ast-\delta_0\le\lambda\le\lambda^\ast+\delta_0\}>0.
\end{equation}
It follows that \textsf{all $\varphi^t_{\lambda}$ with $|\lambda-\lambda^\ast|\le\delta_0$ preserve $W_\varepsilon$.}

\noindent{\bf Step~4} (\textsf{The case  that $0$ is a local maximizer of $f_\lambda$ for every
$\lambda\in (\lambda^\ast,\lambda^\ast+\delta]$}). The same method deals with the case that
$0$ is a local maximizer of $f_\lambda$ for every $\lambda\in [\lambda^\ast-\delta, \lambda^\ast)$.
Let us fix a $\lambda\in (\lambda^\ast,\lambda^\ast+\delta_0]$ below.

If $f_\lambda|_{W_\varepsilon}$ has infinitely many critical orbits, we are done.

\textsf{From now on we suppose that
$f_\lambda|_{W_\varepsilon}$ has only finitely many critical orbits},
$$
\mathcal{O}_0=\{0\}, \mathcal{O}_1,\cdots,\mathcal{O}_m,
$$
where  $\mathcal{O}_1=G(p)$ is an orbit on which $f_\lambda|_{W_\varepsilon}$ attains the minimum.
(Of course, other orbits $\mathcal{O}_i$ might have this property.)
Let $C_\ast(f_\lambda, \mathcal{O}_i;\mathbb{Z}_2)$ be
 the critical group of $f_\lambda$ at $\mathcal{O}_i$ with coefficients in $\mathbb{Z}_2$ (cf. \cite{Wa} for definition).
Then
 \begin{equation}\label{e:A.3}
C_{k}(f_\lambda,0;\mathbb{Z}_2)=\delta_{kn}\mathbb{Z}_2\quad\hbox{and}\quad C_{k}(f_\lambda,\mathcal{O}_1;\mathbb{Z}_2)=H_k(\mathcal{O}_1,\mathbb{Z}_2)\;\forall k
\end{equation}
 by Example~1 on page 33 of \cite{Ch} and \cite[Example~2.1]{Wa}, respectively.
 Let
 $$
 c_\lambda=\max\{f_\lambda|_{\mathcal{O}_i}\,|\,0\le i\le m\}.
 $$
 Since (\ref{e:A.1}) and (\ref{e:A.2}) imply that $\mathcal{O}_i\in W_\varepsilon\setminus\partial W_\varepsilon$, $i=0, 1,\cdots,m$,
 we conclude that
  \begin{equation*}
 c_\lambda<b_\lambda:=\min\{f_\lambda(u)\,|\, u\in \partial W_\varepsilon\}\quad\forall
 \lambda\in[\lambda^\ast-\delta_0, \lambda^\ast+\delta_0]
 \end{equation*}
  if $\delta_0>0$ is small enough.
 Otherwise, we have a sequence $\lambda_k\to\lambda^\ast$ and $x^k\in{\rm Crit}(f_{\lambda_k}|_{W_\varepsilon})$
 such that $f_{\lambda_k}(x^k)\ge b_{\lambda_k}$ for $k=1,2,\cdots$.
Let $u_k\in \partial W_\varepsilon$ such that $f_{\lambda_k}(u_k)=b_{\lambda_k}$ for $k=1,2,\cdots$.
 Since $W_\varepsilon$ is compact, we may assume $x^k\to x^0$ and $u_k\to u_0\in \partial W_\varepsilon$.
 It follows from the assumption a) of Theorem~\ref{th:A.1} that $f'_{\lambda^\ast}(x^0)=0$
 and $f_{\lambda^\ast}(x^0)\ge f_{\lambda^\ast}(u_0)\ge b_{\lambda^\ast}=\varepsilon$, which is a contradiction.
(Take $\Lambda^{00}$ in III) as $[\lambda^\ast,\lambda^\ast+\delta_0]$ in the present case.)

 Take  $d_\lambda\in (c_\lambda, b_\lambda)$.
Since $d_\lambda$ is a regular value of $f_\lambda|_{W_\varepsilon}$ and $W_\varepsilon\setminus\{u\in W_\varepsilon\,|\,f_\lambda(u)< d_\lambda\}$
contains no critical points of $f_\lambda|_{W_\varepsilon}$,
we can use $\varphi^t_{\lambda}$ to construct a strong deformation retract from
$W_\varepsilon$ to $(f|_{W_\varepsilon})_{d_\lambda}:=\{f|_{W_\varepsilon}\le d_\lambda\}$ and hence
\begin{equation}\label{e:contrac}
\hbox{$(f|_{W_\varepsilon})_{d_\lambda}$ is contractible},
\end{equation}
which implies its Euler-Poincar\'e characteristic
$\chi((f|_{W_\varepsilon})_{d_\lambda})=1$.
By \cite[Theorem I.7.2]{Ch} we see that \cite[Theorems~1.1, 1.2]{Wa1} are applicable to $((f|_{W_\varepsilon})_{d_\lambda}, \emptyset)$.
Then we get
$$
\sum^m_{i=0}\sum^\infty_{k=0}(-1)^k{\rm rank}C_k(f_\lambda, \mathcal{O}_i;\mathbb{Z}_2)=\chi((f|_{W_\varepsilon})_{d_\lambda})=1
$$
and so
\begin{equation}\label{e:A.4}
\sum^m_{i=1}\sum^\infty_{k=0}(-1)^k{\rm rank}C_k(f_\lambda, \mathcal{O}_i;\mathbb{Z}_2)=1-(-1)^n
\end{equation}
by (\ref{e:A.3}).

   Suppose that $f_\lambda|_{W_\varepsilon}$ has only critical orbits $\mathcal{O}_0, \mathcal{O}_1$, i.e., $m=1$. Then
   (\ref{e:A.4}) and  the second equality in (\ref{e:A.3}) lead to
   $$
 \chi(\mathcal{O}_1)= \sum^\infty_{k=0}(-1)^k{\rm rank}H_k(\mathcal{O}_1;\mathbb{Z}_2)=1-(-1)^n,
$$
which contradicts (\ref{e:III}). The claims before ``Moreover'' in (III) are completed.

Assuming $\dim X=n>1$, let us prove the part after ``Moreover'' in (III).

\noindent{\bf Proof under the condition (III-1)}. Since $\dim\mathcal{O}_1=0$ and
 $\mathcal{O}_1$ is a compact manifold, $\mathcal{O}_1$ must consist of finitely many points,
  saying $p_1=p,\cdots,p_\ell$. Let $p_0=0$.

  \underline{Suppose that $G$ is nontrivial and $p\notin{\rm Fix}_G$}.
  Then there exists $g\in G$ such that $g\cdot p\ne p$, and so $\ell\ge 2$.
   By the mountain pass lemma (cf. \cite[Proposition~5.42]{MoMoPa})
we have $p^\ast\in{\rm Crit}(f_\lambda|_{W_\varepsilon})$ such that
$$
f_\lambda(p_1)=f_\lambda(p_2)<f_\lambda(p^\ast)=\inf_{\gamma\in\Gamma}\max_{[0,1]}(f_\lambda\circ\gamma),
$$
where $\Gamma:=\{\gamma\in C([0,1], W_\varepsilon)\,|\, \gamma(0)=p_1,\,\gamma(1)=p_2\}$.
(Though $W_\varepsilon$ is not a Banach space, since it is invariant for $\varphi^t_{\lambda}$
we can still use \cite[Proposition~5.42]{MoMoPa}, see \cite{Rab77} and explanations of \cite[page 294]{Ja}.)
By \cite[Theorem]{Hofer85} this $p^\ast$ can be required to be
 either a local minimum or of mountain-pass type. The latter case means that
  the set $\{y\in U\,|\, f_\lambda(y)<f_\lambda(p^\ast)\}$ is nonempty and not path-connected
  for any open neighborhood $U$ of $p^\ast$.
  Note that strict local maxima on  spaces of more than one dimension
  cannot be mountain pass points  (cf. \cite[Proposition~5.1]{Ja}).
  Therefore $p^\ast\ne 0$ because $\dim X=n>1$ and $p_0=0$ is a strict local maximizer of $f_\lambda$ by the above assumption.
Then  $\mathcal{O}_1=G(p)$ and $\mathcal{O}_2=G(p^\ast)$ are desired critical orbits of $f_\lambda$.

\underline{Suppose $p\in{\rm Fix}_G$}. (This includes the case that $G$ is trivial.) Then $\ell=1$ and $\mathcal{O}_1=\{p\}$.
If $f_\lambda|_{W_\varepsilon}$ has only critical orbits $\mathcal{O}_0=\{0\}$ and $\mathcal{O}_1$,
since $C_{k}(f_\lambda,p;\mathbb{Z}_2)=\delta_{k0}\mathbb{Z}_2$ by Example~1 on page 33 of \cite{Ch},
it follows from (\ref{e:A.4})  that $1+(-1)^n=1$, which is
a contradiction.
Therefore $f_\lambda|_{W_\varepsilon}$ has the third critical orbit
$\mathcal{O}^\ast$. Suppose that  there exists a point $q$ in $\mathcal{O}^\ast$ such that it is not a local minimizer of $f_\lambda|_{W_\varepsilon}$.
Then $f_\lambda|_{\mathcal{O}^\ast}>f_\lambda|_{\mathcal{O}_1}$, and we are done.
Otherwise, $\mathcal{O}^\ast$ has a small compact neighborhood $\Omega$ disjoint with $\mathcal{O}_1$, such that
$f_\lambda|_{\mathcal{O}^\ast}<\inf\{f_\lambda(x)\,|\,x\in\partial\Omega\}$.
Fixing any $q\in \mathcal{O}^\ast$, as above we may use the mountain pass lemma to yield
a $p^\ast\in{\rm Crit}(f_\lambda|_{W_\varepsilon})$ such that
$$
f_\lambda(p_1)\le f_\lambda(q)<f_\lambda(p^\ast)=\inf_{\gamma\in\Gamma}\max_{[0,1]}(f_\lambda\circ\gamma),
$$
where $\Gamma:=\{\gamma\in C([0,1], W_\varepsilon)\,|\, \gamma(0)=p_1,\,\gamma(1)=q\}$, and that
$p^\ast$ is either a local minimizer or of mountain-pass type.
Using the assumption $\dim X=n>1$ the same reason as above also leads to $p^\ast\ne 0$.
Hence  $\mathcal{O}_1=G(p)$ and $\mathcal{O}_2=G(p^\ast)$ are desired critical orbits of $f_\lambda$.

\noindent{\bf Proofs under the condition (III-2) or (III-3)}.
Suppose that $f_\lambda|_{W_\varepsilon}$ has only critical orbits $\mathcal{O}_0, \mathcal{O}_1$.
Since $f_\lambda|_{\mathcal{O}_1}<f_\lambda|_{\mathcal{O}_0}$,
by \cite[Theorem I.7.2]{Ch} we may use (the proof of) \cite[Lemma~3.1(2)]{Wa} to get
\begin{equation}\label{e:A.5}
C_k(f_\lambda, 0;\mathbb{Z}_2)=C_k(f_\lambda,\mathcal{O}_0;\mathbb{Z}_2)=H_{k-1}(\mathcal{O}_1,\mathbb{Z}_2)\quad\forall k\ge 2.
\end{equation}
Indeed, in the present case we have $c_\lambda=f_\lambda|_{\mathcal{O}_0}=0$ by the assumption above Step 1. Let $a_\lambda=f_\lambda|_{\mathcal{O}_1}$.
Take $\rho>0$ so small that $a_\lambda+\rho<c_\lambda$. Consider the long exact sequence of
a tripe
$$
\left(({f_\lambda}|_{W_\varepsilon})_{d_\lambda}, ({f_\lambda}|_{W_\varepsilon})_{c_\lambda-\rho}, ({f_\lambda}|_{W_\varepsilon})_{a_\lambda-\rho} \right)=
\left(({f_\lambda}|_{W_\varepsilon})_{d_\lambda}, ({f_\lambda}|_{W_\varepsilon})_{c_\lambda-\rho}, \emptyset\right):
$$
\begin{eqnarray}\label{e:A.6}
&&\cdots\to H_k(({f_\lambda}|_{W_\varepsilon})_{c_\lambda-\rho};\mathbb{Z}_2)\to
H_k(({f_\lambda}|_{W_\varepsilon})_{d_\lambda}; \mathbb{Z}_2)\nonumber\\
&&\to H_k(({f_\lambda}|_{W_\varepsilon})_{d_\lambda}, ({f_\lambda}|_{W_\varepsilon})_{c_\lambda-\rho} ;\mathbb{Z}_2)\to H_{k-1}(({f_\lambda}|_{W_\varepsilon})_{c_\lambda-\rho};\mathbb{Z}_2)\to \cdots
\end{eqnarray}
By \cite[Theorem I.7.2]{Ch} we may use  \cite[Theorem~2.1]{Wa} and (\ref{e:A.3}) to derive
\begin{eqnarray*}
&&H_k(({f_\lambda}|_{W_\varepsilon})_{c_\lambda-\rho};\mathbb{Z}_2)=C_k(f_\lambda,\mathcal{O}_1;\mathbb{Z}_2)=H_k(\mathcal{O}_1,\mathbb{Z}_2),\\
&&H_k(({f_\lambda}|_{W_\varepsilon})_{d_\lambda}; \mathbb{Z}_2)=\delta_{k0}\mathbb{Z}_2,\\
&&H_k(({f_\lambda}|_{W_\varepsilon})_{d_\lambda}, ({f_\lambda}|_{W_\varepsilon})_{c_\lambda-\rho} ;\mathbb{Z}_2)=C_k(f_\lambda,\mathcal{O}_0;\mathbb{Z}_2)=\delta_{kn}\mathbb{Z}_2,
\end{eqnarray*}
where the second equality is because of (\ref{e:contrac}).
These and (\ref{e:A.6}) immediately leads to (\ref{e:A.5}).

\underline{For case (III-2)},   (\ref{e:A.5}) and the first equality in (\ref{e:A.3}) lead to
$$
\mathbb{Z}_2=C_{n}(f_\lambda,0;\mathbb{Z}_2)=H_{n-1}(\mathcal{O}_1,\mathbb{Z}_2)=0
$$
since  $1\le \dim\mathcal{O}_1\le n-2$. This is a  contradiction.

\underline{For case (III-3)}, note that $n\ge 2$ and $H_{n-1}(\mathcal{O}_1,\mathbb{Z}_2)=(\mathbb{Z}_2)^s$
 (cf. Exercise~4.8 on the page 213 of \cite{Mass}), where $s$ is the number of component components of $\mathcal{O}_1$.

If $s>1$, it follows from (\ref{e:A.5}) and the first equality in (\ref{e:A.3}) that
$$
\mathbb{Z}_2=C_{n}(f_\lambda,0;\mathbb{Z}_2)=H_{n-1}(\mathcal{O}_1,\mathbb{Z}_2)= (\mathbb{Z}_2)^s,
$$
which is a  contradiction.

If $s=1$, that is, $\mathcal{O}_1$ is a connected and compact manifold of dimension $n-1$,
then $H_{n-1}(\mathcal{O}_1,\mathbb{Z}_2)= \mathbb{Z}_2= H_{n-1}(S^{n-1},\mathbb{Z}_2)$
and $H_{0}(\mathcal{O}_1,\mathbb{Z}_2)= \mathbb{Z}_2= H_{0}(S^{n-1},\mathbb{Z}_2)$. Hence $1\le r<n-1$
and $H_{r}(\mathcal{O}_1,\mathbb{Z}_2)\ne H_{r}(S^{n-1},\mathbb{Z}_2)=0$.
By (\ref{e:A.5}) and the first equality in (\ref{e:A.3}) we derive
\begin{equation*}
0=C_{r+1}(f_\lambda, 0;\mathbb{Z}_2)=C_{r+1}(f_\lambda,\mathcal{O}_0;\mathbb{Z}_2)=H_{r}(\mathcal{O}_1,\mathbb{Z}_2)\ne 0,
\end{equation*}
and hence a contradiction.

In summary, besides the orbits $\mathcal{O}_0$ and $\mathcal{O}_1$,
$f_\lambda$ has also the third critical orbit  $\mathcal{O}^\ast$ sitting in $W_\varepsilon$.
If $f_\lambda|_{\mathcal{O}^\ast}>f_\lambda|_{\mathcal{O}_1}$, we are done.
Otherwise, $f_\lambda|_{\mathcal{O}^\ast}=f_\lambda|_{\mathcal{O}_1}$.
As in the arguments under the case of ``$p\in{\rm Fix}_G$'' above
we may obtain the desired critical orbit $\mathcal{O}_2$ of $f_\lambda$.

The proof of the theorem is completed.
\end{proof}

There exists a closely related result to Theorem~\ref{th:A.3},
 \cite[Theorem~5.1]{Lu8}, which  may be restated as the following more precise version.

\begin{theorem}[\hbox{\cite[Theorem~5.1]{Lu8}}]\label{th:A.2}
Under the assumptions of Theorem~\ref{th:A.1},
let $X$ be equipped with a continuous\footnote{Actually,
we only need to assume  ``$\pi$ is continuous with respect to $g\in G$''.
Indeed, by theorems in \cite{BoMon45, CherMar70}  this assumption implies $\pi$ to be $C^\infty$
since all $\pi_g$ belong to $\mathscr{L}(X)$ and hence $C^\infty$.}
 action $\pi$ of a compact Lie group $G$ via linear isometries so that
each $f_\lambda$ is invariant under the $G$-action.
Suppose also
\begin{description}
\item[(I)]  the local minimums (resp. maximums) at zero in assumption c) of Theorem~\ref{th:A.1} are strict,
\item[(II)] $u=0$ is an isolated critical point of $f_{\lambda^\ast}$.
 (This is possible because $0\in{\rm Fix}_G$.)
 \end{description}
Then when the Lie group  $G$ is equal to $\mathbb{Z}_2=\{{\rm id}_X, -{\rm id}_X\}$ (resp. $S^1$
without fixed points except $0$, which implies $\dim X$ to be an even more than one),
 for a given neighborhood $\mathcal{B}$ of $0$ in $B_X(0,\epsilon)$ one at least of the following assertions holds:
\begin{description}
\item[(i)]  $u=0$ is a local minimizer  of $f_{\lambda^\ast}$,  and for
 a small one-sided  neighborhood $\Lambda^0$ of $\lambda^\ast$
such that $u=0$ is a local maximizer of $f_\lambda$ for each
$\lambda\in\Lambda^0\setminus\{\lambda^\ast\}$, and $G=\{{\rm id}_X, -{\rm id}_X\}$ (resp. $S^1$),
$f_\lambda$ with $\lambda\in\Lambda^0\setminus\{\lambda^\ast\}$
  has either infinitely many distinct $G$-orbits  in $\mathcal{B}$
with critical values uniformly less than $f_\lambda(0)$
  or at least $\dim X$ (resp. $\frac{1}{2}\dim X$) distinct  nontrivial critical $G$-orbits  in $\mathcal{B}$
   with distinct critical values
  less than $f_\lambda(0)$ provided that $G=\{{\rm id}_X, -{\rm id}_X\}$ (resp. $S^1$).

\item[(ii)] $u=0$ is a local maximizer  of $f_{\lambda^\ast}$,  and for
 a small one-sided neighborhood $\Lambda^0$ of $\lambda^\ast$ such that
 $u=0$ is a local  minimizer  of $f_\lambda$ for each
$\lambda\in\Lambda^0\setminus\{\lambda^\ast\}$, and $G=\{{\rm id}_X, -{\rm id}_X\}$ (resp. $S^1$), $f_\lambda$ with
$\lambda\in\Lambda^0\setminus\{\lambda^\ast\}$ has either infinitely many distinct $G$-orbits
 in $\mathcal{B}$ with critical values uniformly greater than $f_\lambda(0)$
or at least $\dim X$ (resp. $\frac{1}{2}\dim X$) distinct  nontrivial critical
$G$-orbits  in $\mathcal{B}$  with distinct critical values greater than $f_\lambda(0)$.

\item[(iii)] $u=0$ is a saddle point of $f_{\lambda^\ast}$, and for a small neighborhood
$\Lambda^+$ (resp. $\Lambda^-$) of $\lambda^\ast$ such that $u=0$ is a local maximizer (resp.  minimizer) of $f_\lambda$
for each $\lambda$ in $\Lambda^+\setminus\{\lambda^\ast\}$ (resp. $\Lambda^-\setminus\{\lambda^\ast\}$),
and $G=\{{\rm id}_X, -{\rm id}_X\}$ or $S^1$, $f_\lambda$ has
either infinitely many distinct $G$-orbits  in $\mathcal{B}$
with critical values uniformly less (resp. greater) than $f_\lambda(0)$ or
at least $n^+$ (resp. $n^-$) distinct  nontrivial critical
$G$-orbits  in $\mathcal{B}$  with distinct critical values less (resp. greater) than $f_\lambda(0)$;
moreover $n^++n^-\ge\dim X$ (resp. $\frac{1}{2}\dim X$) if $G=\{{\rm id}_X, -{\rm id}_X\}$ (resp. $S^1$).
\end{description}
\end{theorem}
\begin{proof}[\bf Proof]
Indeed, by the assumption a) of Theorem~\ref{th:A.1}, replacing
$f_\lambda$ by $f_\lambda-f_\lambda(0)$  we may assume $f_\lambda(0)=0$
for all $\lambda$. The assumption (I) implies that one of the following holds:
\begin{description}
\item[(A)]  $0\in X$ is a strict local minimizer (resp. maximizer) of $f_{\lambda}$
for all $\lambda\in [\lambda^\ast-\delta, \lambda^\ast)$ (resp.
$\lambda\in (\lambda^\ast, \lambda^\ast+\delta]$).
\item[(B)]  $0\in X$ is a strict local maximizer (resp. minimizer) of $f_{\lambda}$
for all $\lambda\in [\lambda^\ast-\delta, \lambda^\ast)$ (resp.
$\lambda\in (\lambda^\ast, \lambda^\ast+\delta]$).
\end{description}

Since $u=0$ is an isolated critical point of $f_{\lambda^\ast}$ by (II),  the classification of critical points of \cite[page 136]{Ja}
implies that there exist mutually disjoint:\\
\noindent{\bf Case 1}. $0\in X$ is a local minimizer of $f_{\lambda^\ast}$;\\
\noindent{\bf Case 2}. $0\in X$ is a proper local maximizer of $f_{\lambda^\ast}$;\\
\noindent{\bf Case 3}. $0\in X$ is a saddle point of $f_{\lambda^\ast}$.

Let $S^+$ and $S^-$ be as in \cite[page 1278]{Lu8}. Then $S^+\ne\emptyset$ and $S^-=\emptyset$ in Case 1,
$S^+=\emptyset$ and $S^-\ne\emptyset$ in Case 2, and $S^+\ne\emptyset$ and $S^-\ne\emptyset$ in Case 3.
Let $T^+$ and $T^-$ be as in \cite[Lemma~5.4]{Lu8}. For $\delta>0$ small enough, and any
 $\lambda\in [\lambda^\ast-\delta, \lambda^\ast+\delta]$, by \cite[Lemma~5.4]{Lu8} we have:\\
\noindent{\underline{In Case 1}}, $\min\{f_\lambda(z)\,|\,z\in T^+\}>0$ and
\begin{eqnarray*}
&&i_{\mathbb{Z}_2}(T^+)\ge\dim X\quad\hbox{if}\quad G=\mathbb{Z}_2=\{{\rm id}_X, -{\rm id}_X\},\\
&&i_{S^1}(T^+)\ge\frac{1}{2}\dim X \quad\hbox{if}\quad G=S^1.
\end{eqnarray*}
\noindent{\underline{In Case 2}}, $\max\{f_\lambda(z)\,|\,z\in T^-\}<0$ and
\begin{eqnarray*}
&&i_{\mathbb{Z}_2}(T^-)\ge\dim X\quad\hbox{if}\quad G=\mathbb{Z}_2=\{{\rm id}_X, -{\rm id}_X\},\\
&&i_{S^1}(T^-)\ge\frac{1}{2}\dim X \quad\hbox{if}\quad G=S^1.
\end{eqnarray*}
\noindent{\underline{In Case 3}},  $\min\{f_\lambda(z)\,|\,z\in T^+\}>0$,
$\max\{f_\lambda(z)\,|\,z\in T^-\}<0$ and
\begin{eqnarray*}
&&i_{\mathbb{Z}_2}(T^+)+ i_{\mathbb{Z}_2}(T^-)\ge\dim X\quad\hbox{if}\quad G=\mathbb{Z}_2=\{{\rm id}_X, -{\rm id}_X\},\\
&&i_{S^1}(T^+)+ i_{S^1}(T^-)\ge\frac{1}{2}\dim X \quad\hbox{if}\quad G=S^1.
\end{eqnarray*}

First, we assume that (A) holds. By the arguments on the page 1280 of \cite{Lu8} we see that
Claims~5.6,5.7 in \cite{Lu8} may be restated as the following precise versions.

\begin{claim}\label{cl:F4.5}
Given a neighborhood $\mathcal{B}$ of $0$ in $B_X(0,\epsilon)$
there exists a $G$-invariant compact subset $T^-$ in $\mathcal{B}$ and $\delta_0\in (0,\delta]$ such that
if $i_{G}(T^-)=k>0$ and $\lambda\in [\lambda^\ast-\delta_0, \lambda^\ast)$ then
$f_{\lambda}$ has either infinitely many distinct $G$-orbits in $\mathcal{B}$
 with critical values uniformly greater than $f_\lambda(0)$
or at least $k$ distinct  nontrivial critical $G$-orbits in $\mathcal{B}$
 with distinct critical values greater than $f_\lambda(0)$, which also converge to $0$ as $\lambda\to\lambda^\ast$.
\end{claim}

\begin{claim}\label{cl:F4.6}
Given a neighborhood $\mathcal{B}$ of $0$ in $B_X(0,\epsilon)$
there exists a $G$-invariant compact subset $T^+$ in $\mathcal{B}$ and $\delta'_0\in (0,\delta]$ such that
if $i_{G}(T^+)=l>0$ and $\lambda\in (\lambda^\ast, \lambda^\ast+\delta_0']$ then
$f_{\lambda}$ has either infinitely many distinct $G$-orbits in $\mathcal{B}$
with critical values uniformly less than $f_\lambda(0)$ or at least $l$ distinct  nontrivial
critical $G$-orbits in $\mathcal{B}$ with distinct critical values  less than $f_\lambda(0)$,
 which also converge to $0$ as $\lambda\to\lambda^\ast$.
\end{claim}

For Case 2, take $\Lambda^0=[\lambda^\ast-\delta_0, \lambda^\ast]$.
By Claim~\ref{cl:F4.5} we obtain the desired conclusion.

For Case 1, by considering $-f_\lambda$ we should choose $\Lambda^0=[\lambda^\ast, \lambda^\ast+\delta_0']$
and get the desired conclusion by Claim~\ref{cl:F4.6}.

For Case 3, we deduce that $f_\lambda$ has at least $n^+=i_{G}(T^+)$ (resp. $n^-=i_{G}(T^-)$) distinct nontrivial
critical $G$-orbits by Claim~\ref{cl:F4.6} (resp. Claim~\ref{cl:F4.5}).
Therefore $\Lambda^+$ and $\Lambda^-$ should be $[\lambda^\ast, \lambda^\ast+\delta_0']$
and $[\lambda^\ast-\delta_0, \lambda^\ast]$, respectively.

Next, if (B) holds then $\Lambda^0=[\lambda^\ast, \lambda^\ast+\delta_0']$ in Case 1, $\Lambda^0=[\lambda^\ast-\delta_0, \lambda^\ast]$
in Case 2, and $\Lambda^+=[\lambda^\ast-\delta_0, \lambda^\ast]$ and $\Lambda^-=[\lambda^\ast, \lambda^\ast+\delta_0']$.
\end{proof}

\begin{remark}\label{rm:twoTh}
{\rm Theorem~\ref{th:A.3} and Theorem~\ref{th:A.2} cannot be included each other.
Both are complementary. The assumptions in Theorem~\ref{th:A.2} are stronger. }
\end{remark}

\section{Infinite-dimensional bifurcation theorems of Rabinowitz  or Fadell-Rabinowitz type}\label{sec:3}

In this section we first prove Theorem~\ref{th:A.7}, an improvement of
the parameterized splitting theorem \cite[Theorem~A.3]{Lu8} under slightly weaker assumptions,
and a bifurcation theorem (Theorem~\ref{th:A.9}) as a consequence.
From Theorem~\ref{th:A.7} and Theorem~\ref{th:A.1.1} (resp. \cite[Theorem~5.11]{Lu8}) we directly obtain
improvements of \cite[Theorem~4.6]{Lu8} (resp. \cite[Theorem~5.12]{Lu8}) and
Theorem~\ref{th:A.10} (resp. Theorem~\ref{th:A.11}).
Using Theorems~\ref{th:A.3},~\ref{th:A.7}
we may prove an equivariant bifurcation theorem (Theorem~\ref{th:A.13}), which generalizes
Theorem~\ref{th:A.10} (and \cite[Theorem~4.6]{Lu8}).
By Theorem~\ref{th:A.3} we also prove a generalization of \cite[Theorem~4.2]{Lu8},
Theorem~\ref{th:A.15}. Finally, we give improvements of
\cite[Theorems~5.18,~5.19]{Lu8}, Theorems~\ref{th:Bi.3.20.1},~\ref{th:Bi.3.20.2},
respectively.

\begin{hypothesis}[{\cite[Hypothesis~1.1]{Lu8}}]\label{hyp:A.4}
{\rm Let $H$ be a Hilbert space with inner product $(\cdot,\cdot)_H$
and the induced norm $\|\cdot\|$, and let $X$ be a dense linear subspace in $H$.
Let  $U$ be an open neighborhood of $0$ in $H$,
and let $\mathcal{L}\in C^1(U,\mathbb{R})$ satisfy $\mathcal{L}'(0)=0$.
Assume that the gradient $\nabla\mathcal{L}$ has a G\^ateaux derivative $B(u)\in \mathscr{L}_s(H)$ at every point
$u\in U\cap X$, and that the map $B: U\cap X\to
\mathscr{L}_s(H)$  has a decomposition
$B=P+Q$, where for each $x\in U\cap X$,  $P(x)\in\mathscr{L}_s(H)$ is  positive definitive and
$Q(x)\in\mathscr{L}_s(H)$ is compact. Maps $B$, $P$ and $Q$ are also assumed to satisfy the following
properties:
\begin{description}
\item[(D1)]  $\{u\in H\,|\, B(0)u=\mu u,\;\mu\le 0\}\subset X$.
\item[(D2)] For any sequence $(x_k)\subset
U\cap X$ with $\|x_k\|\to 0$, it holds that $\|P(x_k)u-P(0)u\|\to 0$ for any $u\in H$.
\item[(D3)] The  map $Q: U\cap X\to \mathscr{L}_s(H)$ is continuous at $0$ with respect to the topology
on $H$.
\item[(D4)] For any sequence $(x_k)\subset U\cap X$ with $\|x_k\|\to 0$, there exist
 constants $C_0>0$ and $k_0\in\N$ such that
$(P(x_k)u, u)_H\ge C_0\|u\|^2$ for all $u\in H$ and for all $k\ge k_0$.
\end{description}}
\end{hypothesis}

The condition ({\rm D4}) is equivalent to the following
\begin{description}
\item[\bf (D4*)] There exist positive constants $\eta_0>0$ and  $C'_0>0$ such that $\bar{B}_H(0,\eta_0)\subset U$ and
$$
(P(x)u, u)\ge C'_0\|u\|^2\quad\forall u\in H,\;\forall x\in
\bar{B}_H(0,\eta_0)\cap X.
$$
\end{description}

\begin{hypothesis}[{\cite[Hypothesis~1.3]{Lu8}}]\label{hyp:A.5}
{\rm Let $H$ be a Hilbert space with inner product $(\cdot,\cdot)_H$
and the induced norm $\|\cdot\|$, and let $X$ be a Banach space with
norm $\|\cdot\|_X$, such that $X\subset H$ is dense in $H$ and $\|x\|\le\|x\|_X\;\forall x\in X$.
For an open neighborhood $U$ of $0$ in $H$, $U\cap X$
is also an open neighborhood of $0$ in $X$, denoted by $U^X$.
 Let $\mathcal{L}:U\to\mathbb{R}$ be a continuous functional  satisfying  the following
conditions:
\begin{description}
\item[(F1)] $\mathcal{L}$ is continuously directional
differentiable and $D\mathcal{L}(0)=0$.
\item[(F2)] There exists a continuous and continuously directional differentiable
 map $A: U^X\to X$, which is also  strictly Fr\'{e}chet differentiable
 at $0$,   such that
$D\mathcal{ L}(x)[u]=(A(x), u)_H$ for all $x\in U\cap X$ and $u\in X$.
\item[(F3)] There exists a map $B: U\cap X\to \mathscr{L}_s(H)$ such that
$(DA(x)[u], v)_H=(B(x)u, v)_H$ for all $x\in U\cap X$  and
$u, v\in X$. (So $B(x)$ induces an element in $\mathscr{L}(X)$, denoted by $B(x)|_X$,
and $B(x)|_X=DA(x)\in\mathscr{L}(X),\;\forall x\in U\cap X$.)
\item[(C)]   $\{u\in H\,|\, B(0)(u)\in X\}\subset X$,
in particular  ${\rm Ker}(B(0))\subset X$.
\item[(D)] $B$ satisfies the same conditions as in Hypothesis~\ref{hyp:A.4}.
\end{description}}
\end{hypothesis}

\noindent{\bf 3.1}. {\bf A slight improvement of \cite[Theorem~A.3]{Lu8}
and a sufficient criterion for bifurcations.}
The following is only the parameterized splitting theorem \cite[Theorem~A.3]{Lu8} under the weaker action conditions of groups.

\begin{theorem}\label{th:A.7}
Let $H$, $X$ and $U$ be as in Hypothesis~\ref{hyp:A.5},
 and $\Lambda$ a topological space.
Let $\mathcal{L}_\lambda\in C^1(U, \mathbb{R})$, $\lambda\in\Lambda$, be a continuous family of functionals
    satisfying $\mathcal{L}'_\lambda(0)=0$ for all $\lambda\in\Lambda$.
 For each $\lambda\in\Lambda$, assume that
there exist maps $A_\lambda\in C^1(U^X, X)$ and $B_\lambda:U\cap X\to\mathscr{L}_s(H)$
such that: {\rm a)} $\Lambda\times U^X\ni (\lambda, x)\to A_\lambda(x)\in X$ is continuous;  {\rm b)}
 \begin{equation}\label{e:LAB}
 D\mathcal{L}_\lambda(x)[u]=(A_\lambda(x), u)_H\quad\hbox{and}\quad
(DA_\lambda(x)[u], v)_H=(B_\lambda(x)u, v)_H
\end{equation}
 for all $x\in U\cap X$  and $u, v\in X$; {\rm c)}  $B_\lambda$ has a decomposition
$B_\lambda=P_\lambda+Q_\lambda$, where for each $x\in U\cap X$,
 $P_\lambda(x)\in\mathscr{L}_s(H)$ is  positive definitive and
$Q_\lambda(x)\in\mathscr{L}_s(H)$ is compact.
   Let $0\in H$ be a degenerate critical point of
  some $\mathcal{L}_{\lambda^\ast}$, i.e.,  ${\rm Ker}(B_{\lambda^\ast}(0))\ne\{0\}$.
Suppose also that $P_\lambda$ and $Q_\lambda$ satisfy the following conditions:
    \begin{description}
\item[(i)]  For each $h\in H$, it holds that $\|P_{\lambda}(x)h-P_{\lambda^\ast}(0)h\|\to 0$
as $x\in U\cap X$ approaches to $0$ in $H$ and $\lambda\in\Lambda$ converges to $\lambda^\ast$.

 \item[(ii)]  For some small $\delta>0$, there exists a positive constant $c_0>0$ such that
$$
(P_\lambda(x)u, u)\ge c_0\|u\|^2\quad\forall u\in H,\;\forall x\in
\bar{B}_H(0,\delta)\cap X,\quad\forall\lambda\in \Lambda.
$$
 \item[(iii)]  $Q_\lambda: U\cap X\to \mathscr{L}_s(H)$ is uniformly continuous at $0$  with respect to $\lambda\in \Lambda$.
  \item[(iv)]  If $\lambda\in \Lambda$ converges to $\lambda^\ast$ then
  $\|Q_{\lambda}(0)-Q_{\lambda^\ast}(0)\|\to 0$.
   \item[(v)] $(\mathcal{L}_{\lambda^\ast}, H, X, U, A_{\lambda^\ast}, B_{\lambda^\ast}=P_{\lambda^\ast}+ Q_{\lambda^\ast})$ satisfies Hypothesis~\ref{hyp:A.5}.
    \end{description}
 Let $H^+_\lambda$, $H^-_\lambda$ and $H^0_\lambda$ be the positive definite, negative definite and zero spaces of
${B}_\lambda(0)$.  Denote by $P^0_\lambda$ and $P^\pm_\lambda$ the orthogonal projections onto
 $H^0_\lambda$ and $H^\pm_\lambda=H^+_\lambda\oplus H^-_\lambda$,
and by $X^\star_\lambda=X\cap H^\star_\lambda$ for $\star=+,-$, and by  $X^\pm_\lambda=P^\pm_\lambda(X)$.
 Then there exists a neighborhood $\Lambda_0$ of $\lambda^\ast$ in $\Lambda$,
$\epsilon>0$, a (unique) $C^0$ map
\begin{equation}\label{e:Spli.2.1.1}
\psi:\Lambda_0\times B_{H^0_{\lambda^\ast}}(0,\epsilon)\to X^\pm_{\lambda^\ast}
\end{equation}
which is $C^1$ in the second variable and
satisfies $\psi(\lambda, 0)=0$ for all $\lambda\in \Lambda_0$ and
\begin{equation}\label{e:Spli.2.1.2}
 P^\pm_{\lambda^\ast}A_\lambda(z+ \psi(\lambda,z))=0\quad\forall (\lambda,z)\in \Lambda_0
 \times B_{H^0_{\lambda^\ast}}(0,\epsilon),
 \end{equation}
an open neighborhood $\mathcal{W}$ of $\Lambda_0\times\{0\}$ in $\Lambda_0\times H$ and a homeomorphism
\begin{eqnarray}\label{e:Spli.2.1.3}
&&\Lambda_0\times B_{H^0_{\lambda^\ast}}(0,\epsilon)\times
\left(B_{H^+_{\lambda^\ast}}(0, \epsilon) + B_{H^-_{\lambda^\ast}}(0, \epsilon)\right)\to  \mathcal{W},\nonumber\\
&&\hspace{20mm}({\lambda}, z, u^++u^-)\mapsto ({\lambda},\Phi_{{\lambda}}(z, u^++u^-))
\end{eqnarray}
satisfying $\Phi_{{\lambda}}(0)=0$, such that for each $\lambda\in \Lambda_0$,
$\Phi_{{\lambda}}$ is a homeomorphism  from
$B_{H^0_{\lambda^\ast}}(0,\epsilon)\oplus
B_{H^+_{\lambda^\ast}}(0, \epsilon)\oplus B_{H^-_{\lambda^\ast}}(0, \epsilon)$
onto an open neighborhood $\mathcal{W}_\lambda:=\{v\in H\,|\, (\lambda,v)\in \mathcal{W}\}$ of $0$ in $H$, and
\begin{eqnarray}\label{e:Spli.2.2}
&&\mathcal{L}_{\lambda}\circ\Phi_{\lambda}(z, u^++ u^-)=\|u^+\|^2-\|u^-\|^2+ \mathcal{
L}_{{\lambda}}(z+ \psi({\lambda}, z))\\
&& \quad\quad \forall (z, u^+ + u^-)\in  B_{H^0_{\lambda^\ast}}(0,\epsilon)\times
\left(B_{H^+_{\lambda^\ast}}(0, \epsilon) + B_{H^-_{\lambda^\ast}}(0, \epsilon)\right).\nonumber
\end{eqnarray}
 Moreover, there also hold: {\bf (A)}
 $$
d_z\psi(\lambda,z)=-[P^\pm_{\lambda^\ast}\circ({B}_{\lambda}(z+\psi(\lambda,z))|_{X^\pm_{\lambda^\ast}})]^{-1}
\circ(P^\pm_{\lambda^\ast}\circ({B}_\lambda(z+\psi(\lambda,z))|_{H^0_{\lambda^\ast}})).
$$

\noindent{\bf (B)} The functional
\begin{equation}\label{e:Spli.2.3}
\mathcal{L}_{\lambda}^\circ: B_{H^0_{\lambda^\ast}}(0,\epsilon)\to \mathbb{R},\;
z\mapsto\mathcal{L}_{\lambda}(z+ \psi({\lambda}, z))
\end{equation}
 is of class $C^{2}$, its first-order and second-order differentials  at $z\in
B_{H^0}(0, \epsilon)$ are given by
 \begin{eqnarray}\label{e:Spli.2.4}
&& d\mathcal{L}^\circ_\lambda(z)[\zeta]=\bigl(A_\lambda(z+ \psi(\lambda, z)), \zeta\bigr)_H\quad\forall \zeta\in H^0,\\
  &&d^2\mathcal{L}^\circ_\lambda(0)[z,z']=\left(P^0_{\lambda^\ast}\bigr[{B}_\lambda(0)-
 {B}_\lambda(0)(P^\pm_{\lambda^\ast}{B}_{\lambda}(0)|_{X^\pm_{\lambda^\ast}})^{-1}
 (P^\pm_{\lambda^\ast}{B}_\lambda(0))\bigr]z, z'\right)_H,\nonumber\\
&& \hspace{40mm} \forall z,z'\in H^0.\label{e:Spli.2.5}
 \end{eqnarray}

\noindent{\bf (C)}  Suppose that $\pi:G\times H\to H, (g,u)\mapsto \pi_gu$
is a continuous action of a topological group $G$ via linear isometries on $H$,
and that each $\pi_g$ also restricts to a linear isometry from $(X,\|\cdot\|_X)$ to itself.
If both $U$ and $\mathcal{L}_\lambda$ are $G$-invariant, then $H^0_\lambda$, $H^\pm_\lambda$
are $G$-invariant subspaces, and for each $\lambda\in \Lambda$,
the above maps $\psi(\lambda, \cdot)$  and $\Phi_{\lambda}(\cdot,\cdot)$  are
  $G$-equivariant, and $\mathcal{L}^\circ_{\lambda}$ is $G$-invariant.
  [If $G$ is a Lie group, by a result in \cite{BoMon45} the induced $G$-action on $H^0_\lambda$
  is $C^\infty$ because $\dim H^0_\lambda<\infty$. When $G$ is a compact Lie group, then the assumption
``each $\pi_g$ also restricts to a linear isometry from $(X,\|\cdot\|_X)$ to itself''
may be replaced by ``$G\ni g\mapsto g\cdot x\in X$ is continuous for any $x\in X$,
(therefore $G\times X\ni (g,x)\mapsto g\cdot x\in X$ is also continuous by \cite[Theorem~1]{CherMar70}),
and each $\pi_g$ is  a linear continuous map from $(X,\|\cdot\|_X)$ to itself''.]

\noindent{\bf (D)} If for some $p\in\mathbb{N}$,  $\Lambda$ is a $C^p$ manifold and
$\Lambda\times U^X\ni (\lambda,x)\mapsto A(\lambda, x):=A_\lambda(x)\in X$ is $C^p$, then so is $\psi$.
\end{theorem}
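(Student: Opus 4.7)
The proof will closely mirror the original parametric splitting theorem \cite[Theorem~A.3]{Lu8}, with the key changes concentrated in the Lyapunov--Schmidt reduction step (since the hypotheses on $P_\lambda,Q_\lambda$ here only couple to the reference parameter $\lambda^\ast$) and in part (C) (where the group action assumptions are now weaker). The plan is to carry out a two-stage decomposition: first solve the equation on the range of $P^{\pm}_{\lambda^\ast}$ by the implicit function theorem, then straighten out the remaining piece via a Morse-type change of variable. Throughout, $H^0_{\lambda^\ast}$ is finite-dimensional and contained in $X$ by (C) of Hypothesis~\ref{hyp:A.5}, so we may work with topologies on $H^0_{\lambda^\ast}$, $H^\pm_{\lambda^\ast}$, $X^\pm_{\lambda^\ast}$ interchangeably.

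First I would establish (\ref{e:Spli.2.1.1})--(\ref{e:Spli.2.1.2}) by applying the implicit function theorem to the $C^1$-in-$x$, continuous-in-$\lambda$ map $F:\Lambda\times B_{H^0_{\lambda^\ast}}(0,\epsilon)\times B_{X^\pm_{\lambda^\ast}}(0,\epsilon)\to X^\pm_{\lambda^\ast}$ defined by $F(\lambda,z,w)=P^{\pm}_{\lambda^\ast}A_\lambda(z+w)$. The partial derivative $D_w F(\lambda^\ast,0,0)=P^{\pm}_{\lambda^\ast}\circ B_{\lambda^\ast}(0)|_{X^\pm_{\lambda^\ast}}$ is an isomorphism on $X^\pm_{\lambda^\ast}$, since $B_{\lambda^\ast}(0)$ restricts to an invertible operator on $H^\pm_{\lambda^\ast}$ (by definition of the spectral splitting) and assumption (v) furnishes the $X$-version through Hypothesis~\ref{hyp:A.5}. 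Conditions (i)--(iv) imply that $D_w F(\lambda,z,w)$ remains a uniform isomorphism on a neighborhood of $(\lambda^\ast,0,0)$ (this is where conditions (ii) and (iv) substitute for the full Hypothesis~\ref{hyp:A.5} at each $\lambda$: (ii) provides a coercivity bound on $P_\lambda$ uniform in $\lambda$, and (i),(iv) ensure norm-continuity of $Q_\lambda(x)$ jointly in $(\lambda,x)$ at $(\lambda^\ast,0)$ on the compact subset $H^{\pm,0}_{\lambda^\ast}$). The classical parametric implicit function theorem then yields $\psi$ of the claimed regularity, with formula (A) obtained by differentiating (\ref{e:Spli.2.1.2}).

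For the homeomorphism $\Phi_\lambda$ I would follow the standard generalized Morse lemma argument of \cite[Theorem~A.3]{Lu8}: substituting $w=\psi(\lambda,z)+v$ with $v\in H^\pm_{\lambda^\ast}$, set
\[
\mathcal{G}_\lambda(z,v):=\mathcal{L}_\lambda(z+\psi(\lambda,z)+v)-\mathcal{L}_\lambda^\circ(z),
\]
so that $\mathcal{G}_\lambda(z,0)=0$ and $d_v\mathcal{G}_\lambda(z,0)=0$ (the latter by (\ref{e:Spli.2.1.2})). A standard integration-along-rays identity writes $\mathcal{G}_\lambda$ as $(T_\lambda(z,v)v,v)_H$ for an operator-valued map $T_\lambda$; at $(\lambda^\ast,0,0)$, $T_{\lambda^\ast}(0,0)=\frac12 B_{\lambda^\ast}(0)|_{H^\pm_{\lambda^\ast}}$ is positive definite on $H^+_{\lambda^\ast}$ and negative definite on $H^-_{\lambda^\ast}$, and (i)--(iv) propagate this bound to a uniform one-sided coercivity on a $\Lambda_0\times B(0,\epsilon)$ neighborhood. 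Applying the Morse--Palais change of variable parametrically (or equivalently the square-root construction of the sign-definite operator as in \cite[Theorem~A.3]{Lu8}) produces $\Phi_\lambda$ satisfying (\ref{e:Spli.2.2}). The second derivative formula (\ref{e:Spli.2.5}) then drops out by differentiating $d\mathcal{L}^\circ_\lambda$ using (\ref{e:Spli.2.4}) and (A) at $z=0$.

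Parts (C) and (D) are comparatively routine. For (C), the subspaces $H^0_\lambda$, $H^\pm_\lambda$ are $G$-invariant because they are spectral subspaces of the $G$-equivariant operator $B_\lambda(0)$; the uniqueness of $\psi$ as a solution of (\ref{e:Spli.2.1.2}) forces $G$-equivariance; the operator $T_\lambda$ constructed above inherits $G$-equivariance, and so does its square-root, giving the equivariance of $\Phi_\lambda$. The weakening of the isometry assumption when $G$ is a compact Lie group follows because \cite[Theorem~1]{CherMar70} upgrades separate continuity to joint continuity of $G\times X\to X$, after which continuous linear $\pi_g$ on $X$ can be replaced by $G$-averaged isometries in an equivalent norm. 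For (D), the $C^p$ regularity of $\psi$ follows from the $C^p$ parametric implicit function theorem once $(\lambda,x)\mapsto A_\lambda(x)$ is $C^p$. The main obstacle is the first paragraph's Lyapunov--Schmidt step: verifying that conditions (i)--(iv), which are strictly weaker than imposing Hypothesis~\ref{hyp:A.4} for every $\lambda$, still deliver the uniform invertibility of $D_w F$ and the continuity of $\psi$ in $(\lambda,z)$ jointly; this is the point where the improvement over \cite[Theorem~A.3]{Lu8} lives, and the estimates must be done with care on each finite-dimensional eigen-subspace separately before combining on $H^\pm_{\lambda^\ast}$.
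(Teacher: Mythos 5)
The paper's own proof of this theorem is extremely short: it simply cites \cite[Theorem~A.3]{Lu8} for the existence of $\psi$ and $\Phi_\lambda$ and for parts (A), (B), (D), and then devotes its entire effort to a detailed re-proof of (C), because the group-action hypothesis in (C) is the only thing that has been weakened relative to \cite[Theorem~A.3]{Lu8} (see Remark~\ref{rm:A.8}(i)). Your proposal misidentifies the locus of novelty: you assert that the improvement ``lives'' in verifying that conditions (i)--(iv), being weaker than imposing Hypothesis~\ref{hyp:A.4} at every $\lambda$, still give uniform invertibility in the Lyapunov--Schmidt step. In fact conditions (i)--(v) are unchanged from \cite[Theorem~A.3]{Lu8}; the Lyapunov--Schmidt step is not where anything new happens.

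The more substantive gap is in the mechanism you propose for the generalized Morse change of variable and for the $G$-equivariance of $\Phi_\lambda$. You write $\mathcal{G}_\lambda(z,v)=(T_\lambda(z,v)v,v)_H$ and invoke ``the Morse--Palais change of variable parametrically (or equivalently the square-root construction of the sign-definite operator).'' That device requires $\nabla\mathcal{L}_\lambda$ to be at least $C^1$ in the Hilbert topology so that $T_\lambda$ is well-defined and one can take a continuously varying square root. Here $\mathcal{L}_\lambda$ is only $C^1$ with a G\^ateaux-differentiable gradient $B_\lambda$ that is merely continuous in the strong operator sense (conditions (D2), (D3)); no such $T_\lambda$ is available, and the square-root argument breaks down. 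The paper instead assembles a topological normed vector bundle $\mathcal{E}=\Lambda_0\times\bar B_{H^0_{\lambda^\ast}}(0,\delta)\oplus H^\pm$ with a Finsler structure (\ref{e:LAB-7}), defines $J(\lambda,z,u)={\bf F}_\lambda(z,u)$, verifies the monotonicity/coercivity conditions (\ref{e:LAB-5})--(\ref{e:LAB-6}), equips the bundle with the $G$-action $g\cdot(\lambda,z,u)=(\lambda,g\cdot z,g\cdot u)$, checks $J$ is $G$-invariant, and applies the equivariant parameterized nonsmooth Morse lemma \cite[Theorem~A.2]{Lu3}, which produces a $G$-equivariant preserving-fiber homeomorphism directly. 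Your assertion that ``the operator $T_\lambda$ inherits $G$-equivariance, and so does its square-root'' would not survive the regularity restriction, and without the bundle-theoretic framing you have no way to get equivariance of the (non-canonical) flow-based change of variable. Everything else you say about (C) --- $G$-invariance of $H^0_\lambda, H^\pm_\lambda$ via the commutation $B_\lambda(g\cdot x)\pi_g=\pi_g B_\lambda(x)$, equivariance of $\psi$ by uniqueness, the averaging trick from Remark~\ref{rm:A.8}(iii) for the compact-Lie-group relaxation --- is correct and matches the paper; it is the homeomorphism step and the perceived location of the new content that need repair.
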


\begin{remark}\label{rm:A.8}
{\rm {\bf (i)} The difference between Theorem~\ref{th:A.7} and
\cite[Theorem~A.3]{Lu8} is that the first sentence in (C) is replaced by
``If a compact Lie group $G$  acts on $H$ orthogonally, which induces  $C^1$ isometric actions on $X$''.

\noindent{\bf (ii)} If the topological group $G$ in (C) is a Baire space,
since $H\ni u\mapsto\pi_gu\in H$ is continuous,
by \cite[Theorem~1]{CherMar70} the map $\pi$ is continuous if and only if
$G\ni g\mapsto \pi_gu\in H$ is continuous for each $u\in H$.
Note that $\pi$ in (C) is not required  to induce an action on $X$, i.e.,
$G\times X\ni (g,x)\mapsto g\cdot x\in X$ is continuous, or equivalently
$G\ni g\mapsto g\cdot x\in X$ is continuous for any $x\in X$ (by \cite[Theorem~1]{CherMar70}).

\noindent{\bf (iii)} In order to prove the second claim in the bracket of (C),
 we only need to construct an equivalent norm $\|\cdot\|_X^\ast$ to $\|\cdot\|_X$ such that
\begin{eqnarray}\label{e:EquivNorm}
\|h\cdot x\|_X^\ast=\|x\|_X\quad\hbox{and}\quad\|x\|\le\|x\|_X^\ast,\quad\forall (h, x)\in G\times X.
\end{eqnarray}
To this goal let us fix a right invariant Haar measure $\mu$ on $G$. Since $G\ni g\mapsto \|g\cdot x\|_X\in\mathbb{R}$ is continuous,
$$
\|x\|_X^\ast:=\frac{1}{|G|}\int_G\|g\cdot x\|_X\mu(dg)
$$
is well-defined, where $|G|$ is the volume of $G$ with respect to $\mu$.
Note that $\|\cdot\|_X^\ast$ is $G$-invariant. This can be seen from the following calculation:
\begin{eqnarray*}
\|h\cdot x\|_X^\ast&=&\frac{1}{|G|}\int_G\|g\cdot (h\cdot x)\|_X\mu(dg)\\
&=&\frac{1}{|G|}\int_G\|(gh)\cdot x\|_X\mu(dg)=\frac{1}{|G|}\int_G\|g\cdot x\|_X\mu(dg)=\|x\|_X^\ast
\end{eqnarray*}
by the change of variable $g\mapsto gh^{-1}$. Since $\|g\cdot x\|=\|x\|\le\|x\|_X$ for $(g,x)\in G\times X$, we deduce
\begin{eqnarray}\label{e:compareNorm}
\|x\|=\frac{1}{|G|}\int_G\|g\cdot x\|\mu(dg)\le \frac{1}{|G|}\int_G\|g\cdot x\|_X\mu(dg)=\|x\|_X^\ast,\quad\forall x\in X.
\end{eqnarray}
We also need to prove that the norms $\|\cdot\|_X^\ast$ and $\|\cdot\|_X$ are equivalent.
Since $G$ is compact and $G\ni g\mapsto \|g\cdot x\|_X\in\mathbb{R}$ is continuous, we have $\sup_{g\in G}\|\pi_gx\|_X<+\infty$
and hence $\sup_{g\in G}\|\pi_g\|_{\mathscr{L}(X)}<+\infty$ by the uniformly bounded principle.
It follows that $\|x\|^\ast_X\le\sup_{g\in G}\|\pi_g\|_{\mathscr{L}(X)}\|x\|_X$ for any $x\in X$.
On the other hand $\|x\|_X=\|\pi_g(\pi_g)^{-1}x\|_X\le\|\pi_g\|_{\mathscr{L}(X)}\|(\pi_g)^{-1}x\|_X$ and so
$$
\|(\pi_g)^{-1}x\|_X\ge \frac{1}{\sup_{g\in G}\|\pi_g\|_{\mathscr{L}(X)}}\|x\|_X.
$$
But $(\pi_g)^{-1}=\pi_{g^{-1}}$. We derive that
$$
\|x\|_X^\ast=\frac{1}{|G|}\int_G\|g\cdot x\|_X\mu(dg)\ge \frac{1}{\sup_{g\in G}\|\pi_g\|_{\mathscr{L}(X)}}\|x\|_X.
$$
Hence the norms $\|\cdot\|_X^\ast$ and $\|\cdot\|_X$ are equivalent.

\noindent{\bf (iv)} If the condition ``${\rm Ker}(B_{\lambda^\ast}(0))\ne\{0\}$'' in Theorem~\ref{th:A.7} is changed into
``${\rm Ker}(B_{\lambda^\ast}(0))=\{0\}$'', a shorter proof gives rise to a generalization of
\cite[Theorem A.2]{Lu8}:
\textsf{Then there exists a neighborhood $\Lambda_0$ of $\lambda^\ast$ in $\Lambda$, $\epsilon>0$, a family of open neighborhoods of $0$ in
$H$, $\{W_\lambda\,|\, \lambda\in\Lambda_0\}$,
and a family of origin-preserving homeomorphisms,
$$
\phi_\lambda: B_{H^+_{\lambda^\ast}}(0,\epsilon) +
B_{H^-_{\lambda^\ast}}(0,\epsilon)\to W_\lambda, \quad \lambda\in\Lambda_0,
$$
 such that
$$
\mathcal{L}_{\lambda}\circ\phi_\lambda(u^++ u^-)=\|u^+\|^2-\|u^-\|^2,
\quad\forall (u^+, u^-)\in B_{H^+_{\lambda^\ast}}(0,\epsilon)\times
B_{H^-_{\lambda^\ast}}(0,\epsilon).
$$
Moreover, $\Lambda_0\times (B_{H^+_{\lambda^\ast}}(0,\epsilon) +
B_{H^-_{\lambda^\ast}}(0,\epsilon))\ni (\lambda, u)\mapsto \phi_\lambda(u)\in H$
is continuous, and $0$ is an isolated critical point of each $\mathcal{L}_{\lambda}$ with $\lambda\in\Lambda_0$.}
}
\end{remark}

\begin{proof}[\bf Proof of Theorem~\ref{th:A.7}]
(A) and (B) were proved in \cite{Lu8}. (D) is clear by their proof.
We here give  detailed proofs for the conclusions in (C) because
 the present conditions are weaker than those in \cite[Theorem~A.3(iii)]{Lu8}.

Take $\eta>0$ so small that
$B_{H^0_{\lambda^\ast}}(0,\eta)\oplus B_{H^\pm_{\lambda^\ast}}(0,\eta)\subset U$ and hence
$B_{H^0_{\lambda^\ast}}(0,\eta)\oplus B_{X^\pm_{\lambda^\ast}}(0,\eta)\subset U^X$.
Follow the notations in the proof of \cite[Theorem A.3]{Lu8}.
Let
$$
\pi:G\times H\to H,\,(g,x)\mapsto g\cdot x=\pi_gx
$$
 be the given $G$-action. That is, $\pi$ is continuous and
$\pi_g:H\to H$ is linear and satisfies $(\pi_gx, \pi_gy)_H=(x,y)_H$ for all $x,y\in H$.
Since $\mathcal{L}_\lambda$ is $G$-invariant we have
$D\mathcal{L}_\lambda(g\cdot x)[\pi_gu]=D\mathcal{L}_\lambda(x)[u]$ for any $(x,u)\in U\times H$, and so
\begin{equation}\label{e:LAB-1}
(A_\lambda(g\cdot x), \pi_gu)_H=(A_\lambda(x), u)_H=(\pi_gA_\lambda(x), \pi_gu)_H\quad\forall (x,u)\in U^X\times H
\end{equation}
by (\ref{e:LAB}), which implies
\begin{equation}\label{e:LAB-2}
A_\lambda(g\cdot x)=\pi_gA_\lambda(x)\quad\forall x\in U^X.
\end{equation}
 Since $A_\lambda\in C^1(U^X, X)$, we derive from (\ref{e:LAB-1}) that
$$
(DA_\lambda(g\cdot x)[\pi_gv], \pi_gu)_H=(DA_\lambda(x)[v], u)_H= (\pi_gDA_\lambda(x)[v], \pi_gu)_H \quad\forall v\in X.
$$
This and the second equality in (\ref{e:LAB}) lead to
$$
(B_\lambda(g\cdot x)\pi_gv, \pi_gu)_H=(B_\lambda(x)[v], u)_H= (\pi_gB_\lambda(x)v, \pi_gu)_H \quad\forall v\in X.
$$
Since $\pi_g\in\mathscr{L}(H)$ and $X$ is dense in $H$, this implies
\begin{equation}\label{e:LAB-3}
B_\lambda(g\cdot x)\pi_g=\pi_gB_\lambda(x)\quad\forall (g, x)\in G\times U^X.
\end{equation}
It follows  that $H^0_{\lambda}$ and $H^\pm_{\lambda}$ are invariant subspaces for $\pi_g$,
and $\pi_gP^0_{\lambda}=P^0_{\lambda}\pi_g$ and $\pi_gP^\pm_{\lambda}=P^\pm_{\lambda}\pi_g$.
Since $\pi_g:H\to H$ is a linear isometry, for any $r>0$ it holds that
\begin{equation}\label{e:LAB-3.1}
\pi_g\big(B_{H^\star_{\lambda^\ast}}(0,r)\big)=B_{H^\star_{\lambda^\ast}}(0,r),\quad\star=+,-,0.
\end{equation}
Moreover, each $\pi_g$ also restricts to a linear isometry from $(X,\|\cdot\|_X)$ to itself. Therefore
\begin{equation}\label{e:LAB-3.2}
\pi_g(X^\star)=X^\star\quad\hbox{and}\quad
\pi_g\left(B_{X^\star_{\lambda^\ast}}(0,r)\right)=B_{H^\star_{\lambda^\ast}}(0,r),\quad\star=+,-,0,
\end{equation}
where $X^\star=X\cap H^\star$, $\star=+,-,0$.
By these, (\ref{e:LAB-2}) implies that  the map
$$
\mathscr{A}:\Lambda\times B_{H^0_{\lambda^\ast}}(0,\eta)\oplus B_{X^\pm_{\lambda^\ast}}(0,\eta)\to
X^\pm_{\lambda^\ast},\;(\lambda,z,x)\mapsto P^\pm_{\lambda^\ast}(A_\lambda(z+x))
$$
satisfies $\mathscr{A}(\lambda, g\cdot z, g\cdot x)=\pi_g\mathscr{A}(\lambda, z, x)$
for any $(g, z, x)\in G\times B_{H^0_{\lambda^\ast}}(0,\eta)\oplus B_{X^\pm_{\lambda^\ast}}(0,\eta)$.
In \cite{Lu8} we had used the implicit function theorem to get a  compact neighborhood $\Lambda_0$ of $\lambda^\ast$ in $\Lambda$,
$2\delta\in (0, \eta)$,
and a unique $C^0$ map $\psi:\Lambda_0\times B_{H^0_{\lambda^\ast}}(0,2\delta)\to  B_{X^\pm_{\lambda^\ast}}(0,\eta)$
such that
$$
\mathscr{A}(\lambda, z,\psi(\lambda,z))=P^\pm_{\lambda^\ast}(A_\lambda(z+\psi(\lambda,z)))\equiv 0,\quad\forall (\lambda,z)\in
\Lambda\times B_{H^0_{\lambda^\ast}}(0,2\delta).
$$
In particular, $P^\pm_{\lambda^\ast}(A_\lambda(g\cdot z+\psi(\lambda,g\cdot z)))\equiv 0\;\forall g\in G$. Moreover, by
(\ref{e:LAB-2}) we have also
\begin{eqnarray*}
0&=&\pi_gP^\pm_{\lambda^\ast}(A_\lambda(z+\psi(\lambda,z)))\\
&=&P^\pm_{\lambda^\ast}\pi_g(A_\lambda(z+\psi(\lambda,z)))=P^\pm_{\lambda^\ast}(A_\lambda(g\cdot z+g\cdot\psi(\lambda,z)))
\end{eqnarray*}
for any $(\lambda,z)\in\Lambda\times B_{H^0_{\lambda^\ast}}(0,2\delta)$. By the assumptions,
$\pi_g(X)\subset X$ and $\pi_g\in\mathscr{L}(X)$. Then
$$
\Lambda_0\times B_{H^0_{\lambda^\ast}}(0, 2\delta)\ni (\lambda,z)\mapsto g\cdot\psi(\lambda,z)=
\pi_g\psi(\lambda,z)\in  B_{X^\pm_{\lambda^\ast}}(0,\eta)
$$
is continuous. Hence the uniqueness of $\psi$ leads to
\begin{equation}\label{e:LAB-4}
\pi_g\psi(\lambda,z)=\psi(\lambda, g\cdot z),\quad\forall (\lambda,z)\in\Lambda\times B_{H^0_{\lambda^\ast}}(0, 2\delta).
\end{equation}

By the proof of \cite[Theorem~A.3]{Lu8},  shrinking $\Lambda_0$ and $\delta>0$
(if necessary) we can obtain positive constants $\mathfrak{a}_1$ and $\mathfrak{a}_2$ such that
for each $\lambda\in\Lambda_0$ the map
\begin{eqnarray*}
{\bf F}_\lambda: B_{H^0_{\lambda^\ast}}(0,2\delta)\oplus B_{H^\pm_{\lambda^\ast}}(0,2\delta)\to \mathbb{R}
\end{eqnarray*}
given by ${\bf F}_\lambda(z,u)=\mathcal{L}_{\lambda}(z+\psi({\lambda}, z)+u)- \mathcal{L}_{{\lambda}}(z+ \psi({\lambda}, z))$
satisfies
\begin{eqnarray}\label{e:LAB-5}
 &(D_2{\bf F}_{{\lambda}}(z, u^+ + u^-_2)-D_2{\bf F}_{{\lambda}}(z, u^++ u^-_1))[u^-_2-u^-_1]\le
-\mathfrak{a}_1\|u^-_2-u^-_1\|^2,\\
&D_2{\bf F}_{{\lambda}}(z, u^++u^-)[u^+-u^-]\ge  \mathfrak{a}_2(\|u^+\|^2+ \|u^-\|^2)\label{e:LAB-6}
\end{eqnarray}
for all $\lambda\in \Lambda_0$,
$z\in B_{H^0_{\lambda^\ast}}(0,2\delta)$ and $u^+\in B_{H^+_{\lambda^\ast}}(0,2\delta)$, $u^-\in B_{H^-_{\lambda^\ast}}(0,2\delta)$.

Consider the topological normed vector bundle
$$
p:\mathcal{E}=\Lambda_0\times \bar{B}_{H^0_{\lambda^\ast}}(0,\delta)\oplus H^\pm\to
\Lambda_0\times \bar{B}_{H^0_{\lambda^\ast}}(0,\delta),\;(\lambda,z, u)\to (\lambda, z).
$$
It has a natural a Finsler structure
$|||\cdot |||:\Lambda_0\times \bar{B}_{H^0_{\lambda^\ast}}(0,\delta)\oplus H^\pm\to\mathbb{R}$
given by
\begin{eqnarray}\label{e:LAB-7}
|||(\lambda, z,u)|||:=\|u\|_H.
\end{eqnarray}
Then $\mathcal{E}=\mathcal{E}^+\oplus\mathcal{E}^-$, where $\mathcal{E}^\ast=\Lambda_0\times \bar{B}_{H^0_{\lambda^\ast}}(0,\delta)\oplus H^\ast$,
$\ast=+,-$, and
$$
B_{2\delta}(\mathcal{E})=\{(\lambda,z, u)\in\mathcal{E}\;|\;|||(\lambda, z,u)|||<2\delta\}=
\Lambda_0\times \bar{B}_{H^0_{\lambda^\ast}}(0,\delta)\oplus B_{H^\pm_{\lambda^\ast}}(0,2\delta).
$$
Define $J:B_{2\delta}(\mathcal{E})\to\mathbb{R}$ by $J(\lambda, z,u):={\bf F}_\lambda(z,u)$.
The restriction of it to the fiber $B_{2\delta}(\mathcal{E})_{(\lambda,z)}\equiv B_{H^\pm_{\lambda^\ast}}(0,2\delta)$
is given by $J_{(\lambda,z)}(u)={\bf F}_\lambda(z,u)$ and so $DJ_{(\lambda,z)}(u)=D_2{\bf F}_\lambda(z,u)$. Then (\ref{e:LAB-5}) and (\ref{e:LAB-6}) imply:
\begin{description}
\item[(i)] $J_{(\lambda,z)}(0)=0$ and $DJ_{(\lambda,z)}(0)=0$.
\item[(ii)] $(DJ_{(\lambda,z)}(u+v_2)-DJ_{(\lambda,z)}(u+v_1))[v_2-v_1]>0$
for $(\lambda,z,u)\in\bar{B}_{\delta}(\mathcal{E}^+)$ and $y_i\in (\bar{B}_{\delta}(\mathcal{E}^-))_{(\lambda,z)}$,
$i=1,2$, $v_1\ne v_2$.
\item[(iii)] $DJ_{(\lambda,z)}(u+v)[u-v]>0$ for any $u,v\in (\bar{B}_{\delta}(\mathcal{E}^+))_{(\lambda,z)}$ with $(u,v)\ne (0,0)$.
\item[(iv)] $DJ_{(\lambda,z)}(u)[u]=D_2{\bf F}_{{\lambda}}(z, u)[u]\ge  \mathfrak{a}_2\|u\|_H^2>p(\|u\|_H)$
for $(\lambda,z,u)\in\bar{B}_{\delta}(\mathcal{E}^+)$, where $p(t)=\frac{1}{2}t^2$ for $t\ge 0$.
\end{description}

Define actions of $G$ on $\Lambda_0\times \bar{B}_{H^0_{\lambda^\ast}}(0,\delta)$ and $\mathcal{E}$ by
$$
g\cdot(\lambda,z)=(\lambda, g\cdot z)\quad\hbox{and}\quad
g\cdot(\lambda,z, u)=(\lambda, g\cdot z, g\cdot u).
$$
Then $p(g\cdot(\lambda,z, u))=(\lambda, g\cdot z)=g\cdot(p(\lambda,z,u))$ and
$$
\mathcal{E}_{(\lambda,z)}\equiv H^\pm\ni u\mapsto g\cdot u\in H^\pm\equiv\mathcal{E}_{g\cdot(\lambda,z)}
$$
 is a vector space isomorphism. Clearly, $g\cdot\mathcal{E}^\ast\subset\mathcal{E}^\ast$, $\ast=+,-$, and
  \begin{eqnarray*}
 |||g(\lambda, z,u)|||_{g\cdot(\lambda,z)}=\|g\cdot u\|_H=\|u\|_H=|||(\lambda, z,u)|||_{(\lambda,z)}.
  \end{eqnarray*}
Moreover,  (\ref{e:LAB-4}) implies
$$
J(g\cdot(\lambda, z,u)):={\bf F}_\lambda(g\cdot z,g\cdot u)={\bf F}_\lambda(z,u)=J(\lambda,z,u).
$$
Because of these and (i)-(iv) above, applying  \cite[Theorem~A.2]{Lu3} to $J$ we get for some small $\epsilon_1\in (0, \delta)$
a preserving-fiber homeomorphism from
$$
B_{\epsilon_1/2}(\mathcal{E}^+)\oplus B_{\epsilon_1/2}(\mathcal{E}^-)=\Lambda_0\times \bar{B}_{H^0_{\lambda^\ast}}(0,\delta)\oplus B_{H^+_{\lambda^\ast}}(0,\epsilon_1/2)\oplus
B_{H^-_{\lambda^\ast}}(0,\epsilon_1/2)
$$
to a $G$-invariant open neighborhood $\widehat{\mathcal{W}}$ of the zero section of
$\mathcal{E}$,
$$
(\lambda,z,u)\mapsto \Phi(\lambda,z,u)=(\lambda,z, \Phi_{(\lambda,z)}(u)),
$$
such that $\Phi_{(\lambda,z)}(0)=0$ and
\begin{eqnarray*}
&&{\bf F}_\lambda(z, \Phi_{(\lambda,z)}(u))=J(\Phi(\lambda, z,u))=\|u^+\|^2_H-\|u^-\|^2_H,\\
&&\Phi(g\cdot(\lambda,z,u))=g\cdot\Phi(\lambda,z,u),\quad i.e., \quad
\Phi_{(\lambda, g\cdot z)}(g\cdot u)=g\cdot\Phi_{(\lambda,z)}(u).
\end{eqnarray*}
Let
$$
\widehat{\mathcal{W}}_{(\lambda,z)}=\{u\in B_{H^+_{\lambda^\ast}}(0,\epsilon_1/2)\oplus
B_{H^-_{\lambda^\ast}}(0,\epsilon_1/2)\,|\, (\lambda,z,u)\in \widehat{\mathcal{W}}\}.
$$
It is an open neighborhood of $0$ in $H^\pm_{\lambda^\ast}$, and
$\Phi_{(\lambda,z)}$ is a homeomorphism from
$B_{H^+_{\lambda^\ast}}(0,\epsilon_1/2)\oplus B_{H^-_{\lambda^\ast}}(0,\epsilon_1/2)$ onto $\widehat{\mathcal{W}}_{(\lambda,z)}$.
Write $\Phi_\lambda(z,u)=(z, \Phi_{(\lambda,z)}(u))$. Then
\begin{eqnarray*}
\Phi_\lambda(g\cdot(z,u))&=&
\Phi_\lambda(g\cdot z, g\cdot u)\\
&=&(g\cdot z, \Phi_{(\lambda,g\cdot z)}(g\cdot u))=(g\cdot z, g\cdot\Phi_{(\lambda,z)}(u))=g\cdot\Phi_\lambda(z,u).
\end{eqnarray*}
for any $g\in G$ and $(\lambda, z, u)\in
\Lambda_0\times \bar{B}_{H^0_{\lambda^\ast}}(0,\delta)\oplus B_{H^+_{\lambda^\ast}}(0,\epsilon_1/2)\oplus
B_{H^-_{\lambda^\ast}}(0,\epsilon_1/2)$. Finally,  take $\epsilon=\min\{\epsilon_1/2, \delta\}$ and let
$\mathcal{W}$ be the image of
$\Lambda_0\times {B}_{H^0_{\lambda^\ast}}(0,\epsilon)\oplus B_{H^+_{\lambda^\ast}}(0,\epsilon)\oplus
B_{H^-_{\lambda^\ast}}(0,\epsilon)$ under $\Phi$.
Then for $\lambda\in\Lambda_0$, $\mathcal{W}_\lambda:=\{v\in H\,|\, (\lambda,v)\in\mathcal{W}\}$
is an open neighborhood of $0$ in $H$, and $\Phi_\lambda$ is
a homeomorphism from ${B}_{H^0_{\lambda^\ast}}(0,\epsilon)\oplus B_{H^+_{\lambda^\ast}}(0,\epsilon)\oplus
B_{H^-_{\lambda^\ast}}(0,\epsilon)$ onto ${\mathcal{W}}_{\lambda}$.
\end{proof}

As an application of Theorem~\ref{th:A.7} we have the following sufficient criterion for bifurcations,
which improves \cite[Theorem~3.6]{Lu8}. Of course, there also exist corresponding versions of \cite[Corollaries~3.7,~3.8]{Lu8}.

\begin{theorem}\label{th:A.9}
In the assumptions of Theorem~\ref{th:A.7},
 if $\Lambda$ is an open internal in $\mathbb{R}$
and  there exist two points in any neighborhood of $\lambda^\ast\in\Lambda$, $\lambda_-<\lambda^\ast<\lambda_+$,
 such that
\begin{equation}\label{e:newbifur}
 \mu_{\lambda_-}\ne \mu_{\lambda_+}\quad\hbox{and}\quad
\nu_{\lambda_-}=\nu_{\lambda_+}=0.
\end{equation}
(Here $\mu_{\lambda}=\dim H^-_\lambda$ and $\nu_{\lambda}=\dim H^0_\lambda$ are dimensions of
the negative definite and zero spaces of $B_\lambda(0)$, respectively.)
  Then  $(\lambda^\ast, 0)$ is a bifurcation point  of $A_\lambda(x)=0$  in $\Lambda\times X$.
\end{theorem}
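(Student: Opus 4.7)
The plan is to apply the parameterized splitting theorem (Theorem~\ref{th:A.7}) to reduce the problem near $(\lambda^\ast,0)$ to a finite-dimensional family of $C^2$ functions on $H^0_{\lambda^\ast}$, and then to derive a contradiction from the invariance of critical groups under continuation through a persistent isolated critical point. I would first rule out the non-degenerate case. Namely, if $m^0(\mathcal{L}_{\lambda^\ast},0)=0$, then Remark~\ref{rm:A.8}(iv) provides, on a neighborhood $\Lambda_0$ of $\lambda^\ast$, origin-preserving homeomorphisms $\phi_\lambda$ conjugating $\mathcal{L}_\lambda$ near $0$ to the fixed quadratic $\|u^+\|^2-\|u^-\|^2$ on $H^+_{\lambda^\ast}\oplus H^-_{\lambda^\ast}$. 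For any $\lambda_\pm\in\Lambda_0$ with $m^0(\mathcal{L}_{\lambda_\pm},0)=0$, the $C^0$-invariance of critical groups together with the Morse identity $C_k(\mathcal{L}_{\lambda_\pm},0)=\delta_{k,m^-(\mathcal{L}_{\lambda_\pm},0)}\mathbb{Z}_2$ at a non-degenerate critical point forces $m^-(\mathcal{L}_{\lambda_\pm},0)=\dim H^-_{\lambda^\ast}$, contradicting the hypothesis $m^-(\mathcal{L}_{\lambda_-},0)\ne m^-(\mathcal{L}_{\lambda_+},0)$. Hence $\dim H^0_{\lambda^\ast}\ge 1$.

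Next I apply Theorem~\ref{th:A.7} at $\lambda^\ast$ to obtain a neighborhood $\Lambda_0$, the continuous map $\psi$, the family of homeomorphisms $\Phi_\lambda$ satisfying~(\ref{e:Spli.2.2}), and the $C^2$ reduced functional $\mathcal{L}^\circ_\lambda$ on $B_{H^0_{\lambda^\ast}}(0,\epsilon)$. By~(\ref{e:Spli.2.1.2}) and~(\ref{e:Spli.2.4}), the assignment $z\mapsto z+\psi(\lambda,z)$ is a bijection between critical points of $\mathcal{L}^\circ_\lambda$ near $0$ in $H^0_{\lambda^\ast}$ and critical points of $\mathcal{L}_\lambda$ near $0$ in $H$. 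Using the Schur-complement identity~(\ref{e:Spli.2.5}) for $d^2\mathcal{L}^\circ_\lambda(0)$, combined with the fact that $P^\pm_{\lambda^\ast}B_\lambda(0)|_{X^\pm_{\lambda^\ast}}$ remains invertible with Morse index $\dim H^-_{\lambda^\ast}$ for $\lambda$ close to $\lambda^\ast$ (by the continuity hypotheses (i) and (iv)), one deduces
\[ m^-(\mathcal{L}_\lambda,0)=\dim H^-_{\lambda^\ast}+m^-(\mathcal{L}^\circ_\lambda,0),\qquad m^0(\mathcal{L}_\lambda,0)=m^0(\mathcal{L}^\circ_\lambda,0). \]
The hypothesis then translates to: for $\lambda_-<\lambda^\ast<\lambda_+$ chosen in $\Lambda_0$, $\mathcal{L}^\circ_{\lambda_\pm}$ is Morse at $0$ with Morse indices $\mu_\pm:=m^-(\mathcal{L}^\circ_{\lambda_\pm},0)$ satisfying $\mu_-\ne \mu_+$.

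For the contradiction, suppose $(\lambda^\ast,0)$ is not a bifurcation point of $A_\lambda(x)=0$. Then there exist an open $N\subset\Lambda_0$ around $\lambda^\ast$ and $\rho>0$ such that $A_\lambda(x)=0$ on $N\times B_X(0,\rho)$ forces $x=0$. By the critical-point bijection above, $0$ is the unique critical point of $\mathcal{L}^\circ_\lambda$ in a fixed ball $B_{H^0_{\lambda^\ast}}(0,\epsilon')$ for every $\lambda\in N$. The hypothesis lets me choose $\lambda_-,\lambda_+$ inside $N$. The classical stability of critical groups for a continuous family of $C^2$ functions on a finite-dimensional manifold with a persistent isolated critical point over a connected parameter set then yields $C_k(\mathcal{L}^\circ_{\lambda_-},0)=C_k(\mathcal{L}^\circ_{\lambda_+},0)$ for every $k$; but the Morse lemma at $\lambda_\pm$ gives $C_k(\mathcal{L}^\circ_{\lambda_\pm},0)=\delta_{k,\mu_\pm}\mathbb{Z}_2$, contradicting $\mu_-\ne\mu_+$. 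Therefore $(\lambda^\ast,0)$ is a bifurcation point.

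The main obstacle will be the splitting of Morse index and nullity in the second step: since $\Phi_\lambda$ is only a $C^0$ homeomorphism, these equalities must be extracted from the Schur-complement identity~(\ref{e:Spli.2.5}) and from the perturbation stability of the Morse data of $P^\pm_{\lambda^\ast}B_\lambda(0)|_{X^\pm_{\lambda^\ast}}$ near $\lambda^\ast$, rather than from a direct chain-rule computation. Once this is granted, the final continuation step is routine: the continuity of $\psi$ and of $A_\lambda$ ensures joint continuity of $(\lambda,z)\mapsto \mathcal{L}^\circ_\lambda(z)$ and of $(\lambda,z)\mapsto d\mathcal{L}^\circ_\lambda(z)$, so a standard pseudo-gradient deformation argument applies.
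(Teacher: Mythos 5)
Your proposal is correct and follows essentially the same route as the paper: reduce via Theorem~\ref{th:A.7}, assume no bifurcation, invoke the $\lambda$-independence of the critical groups of the reduced functionals $\mathcal{L}^\circ_\lambda$ (via the joint continuity of $(\lambda,z)\mapsto\mathcal{L}^\circ_\lambda(z)$ and $(\lambda,z)\mapsto d\mathcal{L}^\circ_\lambda(z)$), and contradict the jump of the Morse index at the nondegenerate parameters $\lambda_\pm$. The only deviation is in how the Morse data of $\mathcal{L}_{\lambda_\pm}$ at $0$ is transferred to $\mathcal{L}^\circ_{\lambda_\pm}$: you derive an index-and-nullity splitting from the Schur-complement formula (\ref{e:Spli.2.5}) and then apply the finite-dimensional Morse lemma, whereas the paper cites the parameterized shifting theorem \cite[Corollary~A.6]{Lu8}, $C_q(\mathcal{L}_\lambda,0)=C_{q-m^-_{\lambda^\ast}}(\mathcal{L}^\circ_\lambda,0)$, together with the concentration of $C_q(\mathcal{L}_{\lambda_\pm},0)$ in degree $m^-(\mathcal{L}_{\lambda_\pm},0)$ --- two equivalent packagings of the same step.
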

\begin{proof}[\bf Proof]
Firstly, note that  the condition ``${\rm Ker}(B_{\lambda^\ast}(0))\ne\{0\}$'' in the present case is implied in the second assumption
in (\ref{e:newbifur}).
(Otherwise, by Remark~\ref{rm:A.8}(iv) we deduce that all $\mathcal{L}_\lambda$ for $\lambda$ near $\lambda^\ast$ have the same Morse indexes at $0$,
and therefore $\lambda\mapsto \mu_{\lambda}$ is constant near $\lambda^\ast$.
This contradicts the first assumption in (\ref{e:newbifur}).)

Therefore we have (\ref{e:Spli.2.1.1})-(\ref{e:Spli.2.5}).
 By  (\ref{e:Spli.2.1.2}) and (\ref{e:Spli.2.4}),   for each $\lambda\in\Lambda_0$,
the map $z\mapsto z+ \psi({\lambda}, z))$ induces an one-to-one correspondence
 between the critical points of  $\mathcal{L}_{\lambda}^\circ$ near $0\in H^0_{\lambda^\ast}={\rm Ker}(B_{\lambda^\ast}(0))$
and zeros of $A_{\lambda}$ near $0\in X$.

\textsf{By a contradiction, suppose that $(\lambda^\ast, 0)\in\Lambda\times X$
 is not a bifurcation point of $A_\lambda(x)=0$ in $\Lambda\times X$.}
 Then by shrinking $\Lambda_0$ toward $\lambda^\ast$ and $\epsilon>0$,  for each
$\lambda\in \Lambda_0$ the functional $\mathcal{L}^\circ_\lambda$ has a unique
critical point $0$ sitting in ${B}_{H^0_{\lambda^\ast}}(0, \epsilon)\subset X$.
Note that (\ref{e:Spli.2.3}) and (\ref{e:Spli.2.4}) imply
\begin{eqnarray*}
&&\Lambda_0\times \bar{B}_{H^0_{\lambda^\ast}}(0, \epsilon/2)\ni (\lambda, z)\mapsto\mathcal{L}_{\lambda}^\circ(z)\in\R\quad\hbox{and}\\
&&\Lambda_0\times \bar{B}_{H^0_{\lambda^\ast}}(0, \epsilon/2)\ni (\lambda, z)\mapsto d\mathcal{L}^\circ_\lambda(z)\in H_{\lambda^\ast}^0=X_{\lambda^\ast}^0
\end{eqnarray*}
 are uniformly continuous (because we can assume $\Lambda_0$ to be compact). By \cite[Theorem 5.1]{CorH} (see \cite[Theorem~2.2]{Lu8})
 we obtain that critical groups
 \begin{eqnarray}\label{e:criticalgroup}
 \hbox{$C_\ast(\mathcal{L}^\circ_\lambda, 0;{\bf K})$ are independent of $\lambda\in \Lambda_0$ for any Abel group ${\bf K}$.}
\end{eqnarray}
Since  \cite[Corollary~A.6]{Lu8} gives rise to
\begin{eqnarray*}
C_q(\mathcal{L}_{{\lambda}}, 0;{\bf K})=C_{q-\mu_{\lambda^\ast}}(\mathcal{L}^\circ_{{\lambda}}, 0;{\bf K}),\quad\forall
q\in\mathbb{N}\cup\{0\},
\end{eqnarray*}
it follows from this and (\ref{e:criticalgroup}) that
 \begin{eqnarray}\label{e:criticalgroup*}
 \hbox{$C_\ast(\mathcal{L}_\lambda, 0;{\bf K})$ are independent of $\lambda\in \Lambda_0$ for any Abel group ${\bf K}$.}
\end{eqnarray}
On the other hand, by the assumptions  there exist sequences
$(\lambda_k^-)\subset (-\infty, \lambda^\ast)\cap\Lambda_0$ and $(\lambda_k^+)\subset (\lambda^\ast, +\infty)\cap\Lambda_0$
converging to $\lambda^\ast$ such that
$\nu_{\lambda_k^-}=0=\nu_{\lambda_k^+}$ and $\mu_{\lambda_k^-}\ne \mu_{\lambda_k^+}$
for all $k\in\N$.
From the first two equalities and \cite[(2.7)]{Lu3} we derive that
$$
C_q(\mathcal{L}_{{\lambda_k^+}}, 0;{\bf K})=\delta^q_{\mu_{\lambda_k^+}}{\bf K}\quad\hbox{and}\quad
C_q(\mathcal{L}_{{\lambda_k^-}}, 0;{\bf K})=\delta^q_{\mu_{\lambda_k^-}}{\bf K},\quad\forall k\in\N.
$$
Hereafter $\delta^q_p=1$ if $p=q$, and $\delta^q_p=0$ if $p\ne q$.
But these and (\ref{e:criticalgroup*}) imply that $\mu_{\lambda_k^+}= \mu_{\lambda_k^-}$ for each $k\in\N$.
A contradiction is yielded.
\end{proof}

\noindent{\bf 3.2}. {\bf A few of bifurcation theorems of Rabinowitz  or Fadell-Rabinowitz type.}
In this subsection, except that Theorem~\ref{th:A.11} is of Fadell-Rabinowitz type, others are of Rabinowitz type.
By Theorems~\ref{th:A.1.1},~\ref{th:A.7} we get the following improvement of \cite[Theorem~4.6]{Lu8} immediately.

\begin{theorem}\label{th:A.10}
Let $H$, $X$ and $U$ be as in Hypothesis~\ref{hyp:A.5},
and let $\{\mathcal{L}_\lambda\in C^1(U, \mathbb{R})\,|\,\lambda\in\Lambda\}$ be a continuous family of functionals
parameterized by an open interval $\Lambda\subset\mathbb{R}$ containing  $\lambda^\ast$.
 For each $\lambda\in\Lambda$, assume $\mathcal{L}'_\lambda(0)=0$, and that
  there exists a map $A_\lambda\in C^1(U^X, X)$ such that $\Lambda\times U^X\ni (\lambda, x)\to A_\lambda(x)\in X$ is continuous,
and that
 $$
 D\mathcal{L}_\lambda(x)[u]=(A_\lambda(x), u)_H\quad\hbox{and}\quad
(DA_\lambda(x)[u], v)_H=(B_\lambda(x)u, v)_H
$$
 for all $x\in U\cap X$  and $u, v\in X$.
 Suppose also that the following conditions hold.
 \begin{description}
 \item[(a)]   $B_\lambda$ has a decomposition
$B_\lambda=P_\lambda+Q_\lambda$, where for each $x\in U\cap X$,
 $P_\lambda(x)\in\mathscr{L}_s(H)$ is  positive definitive and
$Q_\lambda(x)\in\mathscr{L}_s(H)$ is compact, so that
$(\mathcal{L}_{\lambda}, H, X, U, A_{\lambda}, B_{\lambda}=P_{\lambda}+ Q_{\lambda})$ satisfies Hypothesis~\ref{hyp:A.5}.

\item[(b)]  For each $h\in H$, it holds that $\|P_{\lambda}(x)h-P_{\lambda^\ast}(0)h\|\to 0$
as $x\in U\cap X$ approaches to $0$ in $H$ and $\lambda\in\Lambda$ converges to $\lambda^\ast$.

 \item[(c)]  For some small $\delta>0$, there exists a positive constant $c_0>0$ such that
$$
(P_\lambda(x)u, u)\ge c_0\|u\|^2,\quad\forall u\in H,\;\forall x\in
\bar{B}_H(0,\delta)\cap X,\quad\forall\lambda\in \Lambda.
$$
 \item[(d)]  $Q_\lambda: U\cap X\to \mathscr{L}_s(H)$ is uniformly continuous at $0$  with respect to $\lambda\in \Lambda$.
  \item[(e)]  If $\lambda\in \Lambda$ converges to $\lambda^\ast$ then
  $\|Q_{\lambda}(0)-Q_{\lambda^\ast}(0)\|\to 0$.
   \item[(f)]  $\nu_{\lambda^\ast}>0$, $\nu_{\lambda}=0$ for any $\lambda\in\Lambda\setminus\{\lambda^\ast\}$,
   and the Morse indexes of $\mathcal{L}_\lambda$
at $0\in H$ take values $\mu_{\lambda^\ast}$ and $\mu_{\lambda^\ast}+\nu_{\lambda^\ast}$
 as $\lambda\in\mathbb{R}$ varies in both sides of $\lambda^\ast$ and is close to $\lambda^\ast$,
where $\mu_{\lambda}$ and $\nu_{\lambda}$ are the Morse index and the nullity of  $\mathcal{L}_{\lambda}$
at $0$, respectively.
    \end{description}
 Then  $(\lambda^\ast,0)\in\Lambda\times U^X$ is a bifurcation point  for the equation
 \begin{equation}\label{e:Bif.2.2.1*}
A_\lambda(u)=0,\quad (\lambda, u)\in\Lambda\times U^X;
\end{equation}
  in particular, $(\lambda^\ast,0)\in\Lambda\times U$ is a bifurcation point  for the equation
\begin{equation*}
D\mathcal{L}_\lambda(u)=0,
\quad (\lambda,u)\in \Lambda\times U.
\end{equation*}
More precisely, one of the following alternatives occurs:
\begin{description}
\item[(i)] $(\lambda^\ast,0)$ is not an isolated solution  in  $\{\lambda^\ast\}\times U^X$ of the equation (\ref{e:Bif.2.2.1*}).

\item[(ii)]  For every $\lambda\in\Lambda$ near $\lambda^\ast$ there is a nontrivial solution $u_\lambda$ of (\ref{e:Bif.2.2.1*}) in $U^X$,
which  converges to $0$ in $X$ as $\lambda\to\lambda^\ast$.

\item[(iii)] For any given neighborhood $W$ of $0$ in $X$ there is an one-sided  neighborhood $\Lambda^\ast$ of $\lambda^\ast$ such that
for any $\lambda\in\Lambda^\ast\setminus\{\lambda^\ast\}$, (\ref{e:Bif.2.2.1*}) has  at least two  nontrivial solutions in $W$,
which can also be required to correspond to distinct critical values
provided that  $\nu_{\lambda^\ast}>1$ and (\ref{e:Bif.2.2.1*}) has only finitely many nontrivial solutions in $W$.
\end{description}
 \end{theorem}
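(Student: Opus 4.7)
The plan is to reduce Theorem~\ref{th:A.10} to the finite-dimensional Theorem~\ref{th:A.1.1} by applying the parameterized splitting theorem (Theorem~\ref{th:A.7}). Under assumptions (a)--(e) together with the degeneracy ${\rm Ker}(B_{\lambda^\ast}(0))\ne\{0\}$ in (f), Theorem~\ref{th:A.7} applies and furnishes a neighborhood $\Lambda_0$ of $\lambda^\ast$, a radius $\epsilon>0$, the $C^0$ implicit solution $\psi:\Lambda_0\times B_{H^0_{\lambda^\ast}}(0,\epsilon)\to X^{\pm}_{\lambda^\ast}$, the reduced $C^2$ functionals $\mathcal{L}^\circ_\lambda(z)=\mathcal{L}_\lambda(z+\psi(\lambda,z))$ on $B_{H^0_{\lambda^\ast}}(0,\epsilon)$, and a fiber-preserving family of homeomorphisms $\Phi_\lambda$ realizing the splitting identity (\ref{e:Spli.2.2}). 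The continuous map $z\mapsto z+\psi(\lambda,z)$ is a bijection between critical points of $\mathcal{L}^\circ_\lambda$ near $0\in H^0_{\lambda^\ast}$ and zeros of $A_\lambda$ near $0\in X$, and the two functionals take equal values at corresponding points.

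Next I would verify that $\{\mathcal{L}^\circ_\lambda\}_{\lambda\in\Lambda_0}$ meets the hypotheses of Theorem~\ref{th:A.1.1} on the finite-dimensional normed space $H^0_{\lambda^\ast}$. Joint continuity of $(\lambda,z)\mapsto\mathcal{L}^\circ_\lambda(z)$ and of its differential follows from the continuity of $\psi$, of $(\lambda,x)\mapsto A_\lambda(x)$, and from (\ref{e:Spli.2.3})--(\ref{e:Spli.2.4}); criticality of $0$ for $\mathcal{L}^\circ_{\lambda^\ast}$ is immediate. Hypothesis c) of Theorem~\ref{th:A.1} is the critical point and is extracted from (f): since $B_\lambda(0)$ is invertible for $\lambda\ne\lambda^\ast$, the Schur-complement formula (\ref{e:Spli.2.5}) yields $m^-(\mathcal{L}^\circ_\lambda,0)=m^-(\mathcal{L}_\lambda,0)-\mu_{\lambda^\ast}$ and $m^0(\mathcal{L}^\circ_\lambda,0)=0$ for $\lambda\in\Lambda_0\setminus\{\lambda^\ast\}$, so assumption (f) forces $\mathcal{L}^\circ_\lambda$ to have a nondegenerate strict local minimum at $0$ on one side of $\lambda^\ast$ and a nondegenerate strict local maximum on the other side.

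Applying Theorem~\ref{th:A.1.1} to $\{\mathcal{L}^\circ_\lambda\}$ yields one of the alternatives (i), (ii), (iii*), which I would transport back by the bijection $z\mapsto z+\psi(\lambda,z)$. Alternative (i) lifts to conclusion (i) at once: a sequence of nonzero critical points of $\mathcal{L}^\circ_{\lambda^\ast}$ accumulating at $0$ pulls back to zeros of $A_{\lambda^\ast}$ in $U^X$ accumulating at $0$ in $X$ by the continuity of $\psi$. Alternative (ii) yields, for each $\lambda$ near $\lambda^\ast$, a nontrivial zero $u_\lambda=z_\lambda+\psi(\lambda,z_\lambda)$ of $A_\lambda$ converging to $0$ in $X$, which is conclusion (ii). For alternative (iii*), given any $X$-neighborhood $W$ of $0$, I would shrink $\epsilon>0$ so that $z+\psi(\lambda,z)\in W$ for all $(\lambda,z)\in\Lambda_0\times B_{H^0_{\lambda^\ast}}(0,\epsilon)$, apply Theorem~\ref{th:A.1.1}(iii*) on $B_{H^0_{\lambda^\ast}}(0,\epsilon)$, and lift the resulting $p_i$ to $u_i=p_i+\psi(\lambda,p_i)$, using $\mathcal{L}_\lambda(u_i)=\mathcal{L}^\circ_\lambda(p_i)$ to compare critical values.

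The main obstacle I anticipate is the interpretation of ``strict local minimum'' in (iii-2): because (\ref{e:Spli.2.2}) contains the indefinite quadratic form $\|u^+\|^2-\|u^-\|^2$, the lifted point $u_1$ is a strict local minimum of $\mathcal{L}_\lambda$ only along the ``center slice'' $\{z+\psi(\lambda,z):\,z\in B_{H^0_{\lambda^\ast}}(0,\epsilon)\}$ and not on the whole of $H$ when $\mu_{\lambda^\ast}>0$; this is the precise sense of (iii-2) that must be read through the lifting. One also has to verify that the one-sided neighborhood $\Lambda^0$ furnished by Theorem~\ref{th:A.1.1} matches the side dictated by (f), namely the side on which $0$ is a local maximum of $\mathcal{L}^\circ_\lambda$, which is exactly the opposite side to that where $m^-(\mathcal{L}_\lambda,0)=\mu_{\lambda^\ast}$.
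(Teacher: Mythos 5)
Your proposal is correct and follows the paper's own route exactly: the paper derives Theorem~\ref{th:A.10} precisely by applying the parameterized splitting theorem (Theorem~\ref{th:A.7}) to reduce to the family $\mathcal{L}^\circ_\lambda$ on $H^0_{\lambda^\ast}$, extracting the alternating strict minimum/maximum at $0$ from assumption (f) via the Morse index and nullity relations, invoking Theorem~\ref{th:A.1.1}, and lifting back through $z\mapsto z+\psi(\lambda,z)$. Your caveat about the sense in which $u_1$ is a strict local minimum (along the reduced slice rather than in all of $H$ when $\mu_{\lambda^\ast}>0$) is an apt reading of (iii-2).
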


\begin{proof}[\bf Proof]
The first conclusion follows from Theorem~\ref{th:A.9}.
Applying Theorem~\ref{th:A.1.1} to the functionals $\mathcal{L}_{\lambda}^\circ$ in (\ref{e:Spli.2.3})
may yield the claims after ``More precisely''.
\end{proof}

From Theorem~\ref{th:A.7} and \cite[\S4]{BaCl} (cf. \cite[Theorem~5.11]{Lu8}) we may also obtained
the following improvement of \cite[Theorem~5.12]{Lu8}.

 \begin{theorem}\label{th:A.11}
Under the assumptions of Theorem~\ref{th:A.10} let $H$ be equipped with
a continuous action of  a compact Lie group $G$ via Hilbert space isomorphisms on $H$.
Suppose that the action of $G$ on $H$ induces
 a continuous action on $X$ via Banach space isomorphisms on $X$,
 and that both $U$ and $\mathcal{L}_\lambda$ are $G$-invariant (and hence $H^0_\lambda$, $H^+_\lambda$ and $H^-_\lambda$
are $G$-invariant subspaces).
If the fixed point set of the induced $G$-action on $H^0_{\lambda^\ast}$ is $\{0\}$ then
 one of the following alternatives occurs:
\begin{description}
\item[(i)] $(\lambda^\ast,0)$ is not an isolated solution  in  $\{\lambda^\ast\}\times U^X$ of the equation (\ref{e:Bif.2.2.1*});
\item[(ii)] there exist left and right  neighborhoods $\Lambda^-$ and $\Lambda^+$ of $\lambda^\ast$ in $\mathbb{R}$
and integers $n^+, n^-\ge 0$, such that $n^++n^-\ge \ell(SH^0_{\lambda^\ast})$
and for $\lambda\in\Lambda^-\setminus\{\lambda^\ast\}$ (resp. $\lambda\in\Lambda^+\setminus\{\lambda^\ast\}$),
$\mathcal{L}_\lambda$ has at least $n^-$ (resp. $n^+$) distinct critical
$G$-orbits different from $0$, which converge to
 $0$ as $\lambda\to\lambda^\ast$.
\end{description}
In particular,  $(\lambda^\ast, 0)\in [\lambda^\ast-\delta, \lambda^\ast+\delta]\times U^X$
is a bifurcation point of (\ref{e:Bif.2.2.1*}).
\end{theorem}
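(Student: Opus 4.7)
The plan is to apply the parameterized splitting theorem (Theorem~\ref{th:A.7}) equivariantly to reduce the problem to a finite-dimensional equivariant bifurcation problem on $H^0_{\lambda^\ast}$, and then invoke \cite[Theorem~5.11]{Lu8} (whose proof rests on the Fadell--Rabinowitz index arguments of \cite[\S4]{BaCl}). The scheme is completely parallel to the proof of \cite[Theorem~5.12]{Lu8}, with Theorem~\ref{th:A.7} replacing \cite[Theorem~A.3]{Lu8}.

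First I would verify the $G$-equivariance hypothesis of Theorem~\ref{th:A.7}(C). The $G$-action on $H$ is by linear isometries by assumption, and the induced $G$-action on $X$ is by Banach space isomorphisms that are continuous in $g$; by Remark~\ref{rm:A.8}(iii) one can replace $\|\cdot\|_X$ by an equivalent $G$-invariant norm $\|\cdot\|_X^\ast$ satisfying $\|x\|\le\|x\|_X^\ast$, so each $\pi_g$ becomes a linear isometry from $(X,\|\cdot\|_X^\ast)$ to itself. Since $U$ and each $\mathcal{L}_\lambda$ are $G$-invariant, part (C) of Theorem~\ref{th:A.7} applies: $H^0_{\lambda^\ast}$ and $H^\pm_{\lambda^\ast}$ are $G$-invariant subspaces, the maps $\psi(\lambda,\cdot)$ and $\Phi_\lambda$ are $G$-equivariant, and the reduced $C^2$ functional
\[
\mathcal{L}^\circ_\lambda:B_{H^0_{\lambda^\ast}}(0,\epsilon)\to\mathbb{R},\quad z\mapsto\mathcal{L}_\lambda(z+\psi(\lambda,z))
\]
is $G$-invariant.

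Second, I would translate the Morse-index jump hypothesis (f) to $\mathcal{L}^\circ_\lambda$ at $0$. By the shifting formula \cite[Corollary~A.6]{Lu8} combined with hypothesis (f), for $\lambda\in\Lambda\setminus\{\lambda^\ast\}$ sufficiently close to $\lambda^\ast$ the functional $\mathcal{L}^\circ_\lambda$ is nondegenerate at $0$, and its Morse index at $0$ takes the value $0$ on one side of $\lambda^\ast$ and the value $\dim H^0_{\lambda^\ast}=\nu_{\lambda^\ast}$ on the other side. Consequently $0$ is a strict local minimum of $\mathcal{L}^\circ_\lambda$ on one side and a strict local maximum on the other side. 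The key bijection $z\mapsto z+\psi(\lambda,z)$ given by (\ref{e:Spli.2.1.2}) and (\ref{e:Spli.2.4}) turns critical $G$-orbits of $\mathcal{L}^\circ_\lambda$ near $0$ in $H^0_{\lambda^\ast}$ into critical $G$-orbits of $\mathcal{L}_\lambda$ near $0$ (equivalently, zeros of $A_\lambda$ near $0$ in $X$), and preserves convergence to $0$.

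Third, I would apply \cite[Theorem~5.11]{Lu8} to the family $\{\mathcal{L}^\circ_\lambda\}$ on the finite-dimensional $G$-space $H^0_{\lambda^\ast}$. The hypotheses of that theorem are exactly the situation produced by the reduction: a continuous family of $C^1$ $G$-invariant functionals on a finite-dimensional $G$-space whose fixed point set is $\{0\}$, with $0$ an isolated critical point of $\mathcal{L}^\circ_{\lambda^\ast}$ and with opposite strict local extremum behavior on either side of $\lambda^\ast$. If $0$ fails to be isolated as a zero of $A_{\lambda^\ast}$ in $U^X$, alternative (i) holds. Otherwise the finite-dimensional equivariant result yields integers $n^+,n^-\ge 0$ with $n^++n^-\ge\ell(SH^0_{\lambda^\ast})$ and, for $\lambda$ in suitable one-sided deleted neighborhoods $\Lambda^-,\Lambda^+$, the required number of distinct nontrivial critical $G$-orbits of $\mathcal{L}^\circ_\lambda$ in $B_{H^0_{\lambda^\ast}}(0,\epsilon)$, converging to $0$ as $\lambda\to\lambda^\ast$. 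Pulling them back by $z\mapsto z+\psi(\lambda,z)$ gives alternative (ii).

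The main obstacle is the bookkeeping surrounding the Morse-index jump under the splitting: one must confirm that the strict extremum property at $0$ for $\mathcal{L}^\circ_\lambda$ with $\lambda\ne\lambda^\ast$ is what the finite-dimensional Fadell--Rabinowitz-type theorem requires, and that isolatedness of $0$ as a critical point of $\mathcal{L}^\circ_{\lambda^\ast}$ (needed in \cite[Theorem~5.11]{Lu8}) is equivalent, via the bijection $z\mapsto z+\psi(\lambda^\ast,z)$, to isolatedness of $(\lambda^\ast,0)$ as a solution of $A_{\lambda^\ast}(u)=0$ in $U^X$. Once these identifications are made the conclusion is immediate, and $(\lambda^\ast,0)$ is in either case a bifurcation point of (\ref{e:Bif.2.2.1*}).
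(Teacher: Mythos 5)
Your proposal is correct and follows the same route the paper intends: Theorem~\ref{th:A.7}(C) (with the $G$-invariant renorming from Remark~\ref{rm:A.8}(iii)) gives the equivariant finite-dimensional reduction to $\mathcal{L}^\circ_\lambda$ on $H^0_{\lambda^\ast}$, the Morse-index jump in condition (f) translates to alternating strict local extrema at $0$ on the two sides of $\lambda^\ast$, and then the Fadell--Rabinowitz index result from \cite[\S4]{BaCl} as packaged in \cite[Theorem~5.11]{Lu8} yields the $G$-orbits, which pull back via $z\mapsto z+\psi(\lambda,z)$. This is exactly what the paper records (without spelling out the details) when it says the result follows "From Theorem~\ref{th:A.7} and \cite[\S4]{BaCl} (cf.\ \cite[Theorem~5.11]{Lu8})."
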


\begin{remark}\label{rem:Bif.3.4.0}
{\rm  Because of the second claim in the bracket of the conclusion (C) of Theorem~\ref{th:A.7}  this theorem weakens the original assumption ``
 which induces  $C^1$ isometric actions on $X$'' in \cite[Theorem~5.12]{Lu8}.
Moreover, by \cite[Theorem~1]{CherMar70}, the continuity of $G\times X\ni (g,x)\mapsto g\cdot x\in X$
may be replaced by the condition that $G\ni g\mapsto g\cdot x\in X$ is continuous for any $x\in X$.
}
\end{remark}

 \begin{remark}\label{rem:Bif.3.4}
{\rm By \cite[Propositions~2.4,2.6]{BaCl} (cf. \cite[Remark~5.14]{Lu8}), in Theorem~\ref{th:A.11}
\begin{enumerate}
\item[(i)] if $G=(\mathbb{Z}/p\mathbb{Z})^r$, where $r>0$ and $p$ is a prime,
 taking the Borel cohomology $H^\ast_G$ with coefficients in $G=(\mathbb{Z}/p\mathbb{Z})^r$
one gets that $\ell(SH^0_{\lambda^\ast})$  is equal
to $\dim H^0_{\lambda^\ast}$ for $p=2$, and to $\frac{1}{2}\dim H^0_{\lambda^\ast}$ for $p>2$;
\item[(ii)] if $G=(S^1)^r$,  $r>0$,  taking the $\mathbb{Q}$-coefficients Borel cohomology $H^\ast_G$ we get
 $\ell(SH^0_{\lambda^\ast})=\frac{1}{2}\dim H^0_{\lambda^\ast}$;
\item[(iii)] if $G=S^1\times\Gamma$, $\Gamma$ is finite, and such that the fixed point set of $S^1\equiv S^1\times\{e\}$
is trivial,  taking the $\mathbb{Q}$-coefficients Borel cohomology $H^\ast_G$ we have $\ell(SH^0_{\lambda^\ast})=\frac{1}{2}\dim H^0_{\lambda^\ast}$.
\end{enumerate}
}
\end{remark}

By Remark~\ref{rem:Bif.3.4} it is not hard to see that the result derived from Theorems~\ref{th:A.2},~\ref{th:A.7}
is contained in Theorem~\ref{th:A.11}. However,  from Theorems~\ref{th:A.3},~\ref{th:A.7}
we may derive the following theorem, which generalizes
Theorem~\ref{th:A.10} (\cite[Theorem~4.6]{Lu8}). But it and Theorem~\ref{th:A.11} cannot be contained each other.

\begin{theorem}\label{th:A.13}
In Theorem~\ref{th:A.11}, if the assumption ``the fixed point set of the induced $G$-action on $H^0_{\lambda^\ast}$ is $\{0\}$''
is removed, then $(\lambda^\ast,0)\in\Lambda\times U^X$ is a bifurcation point  for the equation
(\ref{e:Bif.2.2.1*}); in particular, $(\lambda^\ast,0)\in\Lambda\times U$ is a bifurcation point  for the equation
\begin{equation*}
D\mathcal{L}_\lambda(u)=0,
\quad (\lambda,u)\in \Lambda\times U.
\end{equation*}
More precisely, one of the following alternatives occurs:
\begin{description}
\item[(i)] $(\lambda^\ast,0)$ is not an isolated solution  in  $\{\lambda^\ast\}\times U^X$ of the equation
(\ref{e:Bif.2.2.1*}).

\item[(ii)]  For every $\lambda\in\Lambda\setminus\{\lambda^\ast\}$ near $\lambda^\ast$ there is a nontrivial $G$-orbit of solutions of (\ref{e:Bif.2.2.1*}) in $U^X$,
which  converges to $0$ in $X$ as $\lambda\to\lambda^\ast$.

\item[(iii)] For any given $G$-invariant neighborhood $\mathcal{N}$ of $0$ in $X$
there is an one-sided neighborhood $\Lambda^0$ of $\lambda^\ast$ such that
for any $\lambda\in\Lambda^0\setminus\{\lambda^\ast\}$,
(\ref{e:Bif.2.2.1*})  has at least two nontrivial $G$-orbit of solutions in $\mathcal{N}$
 provided that the Euler-Poincar\'e characteristic of any nontrivial orbit near $0$ of the induced $G$-action on
 $H^0_{\lambda^\ast}$ is not equal to $1-(-1)^{\nu_{\lambda^\ast}}$, where $\nu_{\lambda^\ast}=\dim H^0_{\lambda^\ast}$
 is the nullity of $\mathcal{L}_{\lambda^\ast}$ at $0$. Moreover,
 for $\lambda\in\Lambda^0\setminus\{\lambda^\ast\}$, if (\ref{e:Bif.2.2.1*})
 has only finitely many  $G$-orbit of solutions in $\mathcal{N}$, then
it has at least two nontrivial $G$-orbit of solutions in $\mathcal{N}$ with different energy
 provided that  $\nu_{\lambda^\ast}>1$ and
 any nontrivial orbit $\mathcal{O}$ near $0$ of the induced $G$-action on $H^0_{\lambda^\ast}$ satisfies one of the following conditions:
  \begin{description}
\item[iii-1)]  $\dim\mathcal{O}=0$ or  $1\le\dim\mathcal{O}\le \nu_{\lambda^\ast}-2$.
\item[iii-2)]  $1\le \dim \mathcal{O}_1=\nu_{\lambda^\ast}-1$, either $\mathcal{O}$ is non-connected or
$\mathcal{O}$ is connected and $H_r(\mathcal{O}, \mathbb{Z}_2)\ne H_r(S^{\nu_{\lambda^\ast}-1}, \mathbb{Z}_2)$ for some $0\le r\le \nu_{\lambda^\ast}-1$.
\end{description}
\end{description}
\end{theorem}
\begin{proof}[\bf Proof]
The first conclusion follows from Theorem~\ref{th:A.9}.
Let us prove others.
By the conclusion (C) in Theorem~\ref{th:A.7}
we have an induced $C^\infty$
 $G$-action on $H^0_{\lambda^\ast}$ via Hilbert space isomorphisms,
 and for each $\lambda\in \Lambda$,
the  maps $\psi(\lambda, \cdot)$  and $\Phi_{\lambda}(\cdot,\cdot)$  in (\ref{e:Spli.2.1.1}) and (\ref{e:Spli.2.1.3}) are
  $G$-equivariant, and $\mathcal{L}^\circ_{\lambda}$ in (\ref{e:Spli.2.3}) is $G$-invariant.
 Clearly, $0\in H^0_{\lambda^\ast}$ belongs to the set of fixed points of the induced $C^\infty$ $G$-action.
As in the proof of \cite[Theorem~5.12]{Lu8} we obtain either
\begin{eqnarray}\label{e:Bif.2.2.3}
0\in H^0_{\lambda^\ast}\;\hbox{is a strict local}\left\{
\begin{array}{ll}
\hbox{minimizer of}\;\mathcal{L}^\circ_{\lambda},&\quad
\forall \lambda\in [\lambda^\ast-\delta, \lambda^\ast),\\
\hbox{maximizer of}\;\mathcal{L}^\circ_{\lambda},&\quad
\forall \lambda\in (\lambda^\ast, \lambda^\ast+\delta]
\end{array}\right.
\end{eqnarray}
or
\begin{eqnarray}\label{e:Bif.2.2.4}
0\in H^0_{\lambda^\ast}\;\hbox{is a strict local}\left\{
\begin{array}{ll}
\hbox{maximizer of}\;\mathcal{L}^\circ_{\lambda},&\quad
\forall \lambda\in [\lambda^\ast-\delta, \lambda^\ast),\\
\hbox{minimizer of}\;\mathcal{L}^\circ_{\lambda},&\quad
\forall \lambda\in (\lambda^\ast, \lambda^\ast+\delta].
\end{array}\right.
\end{eqnarray}
Suppose that any of the conclusions (i)-(ii) does not hold. Then $0\in H$ is
an isolated critical point of $\mathcal{L}_{\lambda^\ast}$ and so
$0\in H^0_{\lambda^\ast}$ is also an isolated critical point of $\mathcal{L}^\circ_{\lambda^\ast}$.
By the assumptions, the Euler-Poincar\'e characteristic of any nontrivial orbit near $0$
 of the induced $G$-action on $H^0_{\lambda^\ast}$ is not equal to $1-(-1)^{\nu_{\lambda^\ast}}$,
and  any nontrivial orbit $\mathcal{O}$ near $0$
  of the induced $G$-action on $H^0_{\lambda^\ast}$ satisfies
 one of the above conditions iii-1) and iii-2) if $\nu_{\lambda^\ast}>1$.
 Applying Theorem~\ref{th:A.3} to the family of functionals in (\ref{e:Spli.2.3})
we obtain:

\textsf{There exists a small $G$-invariant neighborhood $W$ of $0$ in $B_{H^0_{\lambda^\ast}}(0,\epsilon)$
  and an one-sided  neighborhood
 $\Lambda^0$ of $\lambda^\ast$ such that $\psi(\lambda,W)\subset\mathcal{N}$ for all $\lambda\in\Lambda^0$
 and that for every $\lambda\in\Lambda^0\setminus\{\lambda^\ast\}$ there holds:\\
 (a) the functional $\mathcal{L}_{\lambda}^\circ$ has at least two  nontrivial critical orbits  in $W$,
 $\mathcal{O}_i^\ast$, $i=1,2$.\\ 
 (b) If $\nu_{\lambda^\ast}>1$ and $\mathcal{L}_{\lambda}^\circ$ has only finitely many critical orbits in $W$,
 then the orbits $\mathcal{O}_i^\ast$, $i=1,2$,  can be chosen to satisfy
 $\mathcal{L}_{\lambda}^\circ|_{\mathcal{O}_2^\ast}\ne\mathcal{L}_{\lambda}^\circ|_{\mathcal{O}_1^\ast}$.}

For a critical orbit $\mathcal{O}^\ast$ of $\mathcal{L}_{\lambda}^\circ$ in $W$ and any $z\in\mathcal{O}^\ast$,
by Theorem~\ref{th:A.7}, $\mathcal{O}:=G(\psi(\lambda, z))$ is a critical orbit of $\mathcal{L}_{\lambda}$ sitting in
$\mathcal{N}$ and
$\mathcal{L}_{\lambda}|_{\mathcal{O}}=\mathcal{L}_{\lambda}^\circ|_{\mathcal{O}^\ast}$.
Therefore $\mathcal{O}_i:=G(\psi(\lambda, z_i))$ with $z_i\in \mathcal{O}^\ast_i$, $i=1,2$, satisfy the claims in iii).

\end{proof}

{\bf Note}:  In Theorem~\ref{th:A.13}, if $G$ is a finite group, by Theorem~\ref{th:A.10}
we see that the sentence ``(\ref{e:Bif.2.2.1*})  has at least two nontrivial $G$-orbit of solutions in $\mathcal{N}$
 provided that the Euler-Poincar\'e characteristic of any nontrivial orbit near $0$ of the induced $G$-action on
 $H^0_{\lambda^\ast}$ is not equal to $1-(-1)^{\nu_{\lambda^\ast}}$, where $\nu_{\lambda^\ast}=\dim H^0_{\lambda^\ast}$
 is the nullity of $\mathcal{L}_{\lambda^\ast}$ at $0$'' in Theorem~\ref{th:A.13}(iii)
may be replaced by ``(\ref{e:Bif.2.2.1*})  has at least two nontrivial solutions in $\mathcal{N}$''.
Similar replacements also hold for the following theorems and corollaries.\\

There is also a corresponding corollary to \cite[Corollary~5.13]{Lu8}.
Moreover, if ``Theorem~\ref{th:A.10}'' (which is implied in the assumptions of Theorem~\ref{th:A.11})
in Theorem~\ref{th:A.13} is replaced by ``\cite[Theorem~6.1]{Lu8}''
the  conclusions are still true.

However, if the reduced functionals on a Banach space of finite dimension are only $C^1$,
the result in \cite[\S4]{BaCl} (cf. \cite[Theorem~5.11]{Lu8}) cannot be used. It is possible
for us to use Theorems~\ref{th:A.3},~\ref{th:A.2}.

\begin{hypothesis}[{\cite[Hypothesis~1.2]{Lu8}}]\label{hyp:A.14}
{\rm Let $U\subset H$ be as in Hypothesis~\ref{hyp:A.4},  $\mathcal{L}\in C^1(U,\mathbb{R})$ satisfy
$\mathcal{L}'(0)=0$  and the gradient $\nabla\mathcal{L}$ have the G\^ateaux derivative
$\mathcal{L}''(u)\in\mathscr{L}_s(H)$ at any $u\in U$, which is a compact operator
 and approaches to $\mathcal{L}''(0)$ in $\mathscr{L}_s(H)$ as $u\to 0$ in $H$.}
\end{hypothesis}

\begin{theorem}\label{th:A.15}
Let $\mathcal{L}\in C^1(U,\mathbb{R})$ (resp.  $\widehat{\mathcal{L}}\in C^1(U,\mathbb{R})$) satisfy
 Hypothesis~\ref{hyp:A.4} with $X=H$ (resp.  Hypothesis~\ref{hyp:A.14}),
and let $\lambda^\ast\in\mathbb{R}$ be an isolated eigenvalue of
\begin{eqnarray*}
\mathcal{L}''(0)v-\lambda\widehat{\mathcal{L}}''(0)v=0,\quad v\in H.
\end{eqnarray*}
(If $\lambda^\ast=0$, it is enough that $\widehat{\mathcal{L}}\in C^1(U,\mathbb{R})$ satisfies Hypothesis~\ref{hyp:A.14}
without requirement that each $\widehat{\mathcal{L}}''(u)\in\mathscr{L}_s(H)$ is compact.)
Assume that $H$ is equipped with a continuous action of  a compact Lie group $G$ via Hilbert space isomorphism on $H$
such that  $U$ and $\mathcal{L}, \widehat{\mathcal{L}}$ are $G$-invariant.
Suppose that  the Morse indexes of $\mathcal{L}_\lambda:=\mathcal{L}-\lambda\widehat{\mathcal{L}}$
at $0\in H$ take values $\mu_{\lambda^\ast}$ and $\mu_{\lambda^\ast}+\nu_{\lambda^\ast}$
 as $\lambda\in\mathbb{R}$ varies in both sides of $\lambda^\ast$ and is close to $\lambda^\ast$,
where $\mu_{\lambda}$ and $\nu_{\lambda}$ are the Morse index and the nullity of  $\mathcal{L}_{\lambda}$
at $0$, respectively.  Then  $(\lambda^\ast, 0)\in\mathbb{R}\times U$ is a bifurcation point  for the equation
(\ref{e:Bi.2.7.3}), and one of the following alternatives occurs:
\begin{description}
\item[(i)] $(\lambda^\ast, 0)$ is not an isolated solution in $\{\lambda^\ast\}\times U$ of
\begin{eqnarray}\label{e:Bi.2.7.3}
\mathcal{L}'(u)=\lambda\widehat{\mathcal{L}'}(u).
\end{eqnarray}
 \item[(ii)]  For every $\lambda\in\mathbb{R}$ near $\lambda^\ast$ there is a nontrivial $G$-orbit of solutions
 of (\ref{e:Bi.2.7.3}) in $U$, which converges to $0$ as $\lambda\to\lambda^\ast$;

\item[(iii)] For any given $G$-invariant neighborhood $\mathcal{N}$ of $0$ in $U$
there is an one-sided neighborhood $\Lambda^0$ of $\lambda^\ast$ in $\mathbb{R}$ such that
for any $\lambda\in\Lambda^0\setminus\{\lambda^\ast\}$,  (\ref{e:Bi.2.7.3})
 has at least two nontrivial $G$-orbit of solutions in $\mathcal{N}$
 provided that the Euler-Poincar\'e characteristic of any nontrivial orbit near $0$ of the induced $G$-action on
 $H^0_{\lambda^\ast}:={\rm Ker}(\mathcal{L}''(0)-\lambda^\ast\widehat{\mathcal{L}}''(0))$
   (which is, by a result in \cite{BoMon45}, a $C^\infty$ $G$-action because $\dim H^0_{\lambda^\ast}<\infty$)
  is not equal to $1-(-1)^{\nu_{\lambda^\ast}}$, where $\nu_{\lambda^\ast}=\dim H^0_{\lambda^\ast}$
 is the nullity of $\mathcal{L}_{\lambda^\ast}$ at $0$.
 Moreover,  for $\lambda\in\Lambda^0\setminus\{\lambda^\ast\}$, if (\ref{e:Bi.2.7.3})
 has only finitely many  $G$-orbit of solutions in $\mathcal{N}$, then
it has at least two nontrivial $G$-orbit of solutions in $\mathcal{N}$ with different energy
 provided that  $\nu_{\lambda^\ast}>1$ and
 any nontrivial orbit $\mathcal{O}$ near $0$ of the induced $G$-action on $H^0_{\lambda^\ast}$ satisfies one of the following conditions:
  \begin{description}
\item[iii-1)]  $\dim\mathcal{O}=0$ or  $1\le\dim\mathcal{O}\le \nu_{\lambda^\ast}-2$.
\item[iii-2)]  $1\le \dim \mathcal{O}=\nu_{\lambda^\ast}-1$, either $\mathcal{O}$ is non-connected or
$\mathcal{O}$ is connected and $H_r(\mathcal{O}, \mathbb{Z}_2)\ne H_r(S^{\nu_{\lambda^\ast}-1}, \mathbb{Z}_2)$ for some $0\le r\le \nu_{\lambda^\ast}-1$.
\end{description}
\end{description}
\end{theorem}
\begin{proof}[\bf Proof]
\cite[Theorem~4.2]{Lu8} gives the first claim.
In order to prove others, let $\mathcal{L}^\circ_\lambda$ be as in \cite[(4.5)]{Lu8}, (which was obtained by \cite[Theorem~2.16]{Lu7}),   i.e.,
\begin{equation}\label{e:Bi.2.15}
 \mathcal{L}^\circ_\lambda: B_H(0, \epsilon)\cap H^0_{\lambda^\ast}\to\mathbb{R},\;z\mapsto
 \mathcal{L}(z+\psi(\lambda, z))- \lambda\widehat{\mathcal{L}}(z+\psi(\lambda, z)),
  \end{equation}
where $\psi:[\lambda^\ast-\delta, \lambda^\ast+\delta]\times (B_H(0,\epsilon)\cap H^0_{\lambda^\ast})\to (H^0_{\lambda^\ast})^\bot$
is a unique continuous map satisfying
\begin{eqnarray*}
 P^\bot_{\lambda^\ast}\nabla\mathcal{L}(z+ \psi(\lambda, z))-
 \lambda P^\bot_{\lambda^\ast}\nabla\widehat{\mathcal{L}}(z+ \psi(\lambda, z))=0\quad\forall z\in B_{H}(0,\epsilon)\cap H^0_{\lambda^\ast}.
 \end{eqnarray*}
A point $z\in B_{H}(0,\epsilon)\cap H^0_{\lambda^\ast}$ is a critical point of  $\mathcal{L}^\circ_\lambda$
if and only if $z+\psi(\lambda,z)$ is a critical point of $\mathcal{L}_\lambda=\mathcal{L}- \lambda\widehat{\mathcal{L}}$
near $0\in H$. It was proved in \cite[(4.9), (4.11)]{Lu8} that
(\ref{e:Bif.2.2.3}) and (\ref{e:Bif.2.2.4}) hold for these $\mathcal{L}^\circ_\lambda$.
Note that $0\in H^0_{\lambda^\ast}$ is a fixed point for the induced $G$-action on $H^0_{\lambda^\ast}$.
As in the proof of Theorem~\ref{th:A.13} the conclusions may follow from Theorem~\ref{th:A.3}.
\end{proof}

Because $\mathcal{L}^\circ_\lambda$ in (\ref{e:Bi.2.15}) is only $C^1$,
 the result in \cite[\S4]{BaCl} (cf. \cite[Theorem~5.11]{Lu8}) cannot be applied to it.
\cite[Theorem~5.9]{Lu8} was obtained by applying \cite[Theorem~5.1]{Lu8} to it.
Therefore using Theorem~\ref{th:A.2} instead of \cite[Theorem~5.1]{Lu8} we may weaken the assumption
``a linear isometric action of a compact  Lie group $G$'' in \cite[Theorem~5.1]{Lu8}
as ``a continuous action $\pi$ of a compact Lie group $G$ via linear isometries''.

Corresponding to \cite[Corollary~4.3]{Lu8} and \cite[Corollary~4.4]{Lu8}, we have:

\begin{corollary}\label{cor:Bi.2.4.1}
Let $\mathcal{L}\in C^1(U,\mathbb{R})$ (resp.  $\widehat{\mathcal{L}}\in C^1(U,\mathbb{R})$) satisfy
 Hypothesis~\ref{hyp:A.4} with $X=H$ (resp.  Hypothesis~\ref{hyp:A.14}),
and let $\lambda^\ast\in\mathbb{R}$ be an isolated eigenvalue of
\begin{eqnarray}\label{e:Bi.2.7.4}
\mathcal{L}''(0)v-\lambda\widehat{\mathcal{L}}''(0)v=0,\quad v\in H.
\end{eqnarray}
(If $\lambda^\ast=0$, it is enough that $\widehat{\mathcal{L}}\in C^1(U,\mathbb{R})$ satisfies Hypothesis~\ref{hyp:A.14}
without requirement that each $\widehat{\mathcal{L}}''(u)\in\mathscr{L}_s(H)$ is compact.)
Suppose that $\widehat{\mathcal{L}}''(0)$ is either semi-positive or semi-negative.
Assume that $H$ is equipped with a continuous action of  a compact Lie group $G$ via Hilbert space isomorphism on $H$
 such that  $U$ and $\mathcal{L}, \widehat{\mathcal{L}}$ are $G$-invariant.
Then the conclusions of Theorem~\ref{th:A.15} hold true.
\end{corollary}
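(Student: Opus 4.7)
The plan is to derive Corollary~\ref{cor:Bi.2.4.1} directly from Theorem~\ref{th:A.15}. All hypotheses of that theorem are already assumed in the corollary except the Morse-index jump condition on $\mathcal{L}_\lambda=\mathcal{L}-\lambda\widehat{\mathcal{L}}$. The entire task therefore reduces to showing that if $\widehat{\mathcal{L}}''(0)$ is semi-positive (the semi-negative case being symmetric, swapping the roles of the two sides of $\lambda^\ast$), then
\begin{equation*}
m^-(\mathcal{L}_\lambda,0)=\begin{cases}\mu_{\lambda^\ast}, &\lambda^\ast-\delta<\lambda<\lambda^\ast,\\ \mu_{\lambda^\ast}+\nu_{\lambda^\ast}, &\lambda^\ast<\lambda<\lambda^\ast+\delta,\end{cases}
\end{equation*}
for some $\delta>0$. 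Once this is established, Theorem~\ref{th:A.15} delivers the three alternatives verbatim.

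Set $A=\mathcal{L}''(0)$, $B=\widehat{\mathcal{L}}''(0)$, and $V=H^0_{\lambda^\ast}=\ker(A-\lambda^\ast B)$, noting that $\dim V=\nu_{\lambda^\ast}<\infty$ by Hypothesis~\ref{hyp:A.4}. The first step is to prove that $B|_V$ is positive definite. If some nonzero $v\in V$ satisfied $\langle Bv,v\rangle=0$, then by the Cauchy--Schwarz inequality for the semi-positive bilinear form $\langle B\cdot,\cdot\rangle$ one would have $Bv=0$, hence $Av=\lambda^\ast Bv=0$, so $v\in\ker(A-\lambda B)$ for \emph{every} $\lambda\in\mathbb{R}$. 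This would make every real number an eigenvalue of (\ref{e:Bi.2.7.4}), contradicting the assumption that $\lambda^\ast$ is isolated. Hence $\langle Bv,v\rangle>0$ for all $v\in V\setminus\{0\}$.

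The second step is a Lyapunov--Schmidt/Schur-complement reduction to track the small eigenvalues of $A-\lambda B$ as $\lambda=\lambda^\ast+t$ varies near $\lambda^\ast$. Splitting $H=V\oplus V^\perp$, one writes
\begin{equation*}
A-\lambda B=\begin{pmatrix}-tB_{VV}&-tB_{VV^\perp}\\-tB_{V^\perp V}&K-tB_{V^\perp V^\perp}\end{pmatrix},
\end{equation*}
where $K=(A-\lambda^\ast B)|_{V^\perp}$ is symmetric, invertible, and has exactly $\mu_{\lambda^\ast}$ negative eigenvalues. Hypothesis~\ref{hyp:A.4} presents $A$, and (via Hypothesis~\ref{hyp:A.14} applied to $\widehat{\mathcal{L}}$) also $A-\lambda^\ast B$, as a positive-definite-plus-compact operator, so $K$ is Fredholm with essential spectrum bounded away from zero; hence for $|t|$ small, $K-tB_{V^\perp V^\perp}$ remains invertible with unchanged Morse index $\mu_{\lambda^\ast}$. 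A standard Schur-complement computation then shows that the eigenvalues of $A-\lambda B$ near zero are controlled, to leading order, by the finite-dimensional symmetric operator $-tB|_V+O(t^2)$ on $V$. Since $B|_V$ is positive definite, these $\nu_{\lambda^\ast}$ small eigenvalues are all strictly negative for $t>0$ and all strictly positive for $t<0$. Summing the two contributions yields the required jump.

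The main obstacle I anticipate is the infinite-dimensional invertibility/Morse-index stability step on $V^\perp$: one must rule out the possibility that eigenvalues migrate through zero from the essential spectrum of $K$ under the small perturbation $-tB_{V^\perp V^\perp}$. This is handled by exploiting the positive-definite-plus-compact structure of $A-\lambda^\ast B$ provided by Hypothesis~\ref{hyp:A.4}, which gives a definite positive lower bound on the essential spectrum of $K$. In the exceptional case $\lambda^\ast=0$, where $\widehat{\mathcal{L}}''(u)$ is not assumed compact, the same argument still works because $A-\lambda^\ast B=A$ is already positive-definite-plus-compact from $\mathcal{L}$ alone, and $-tB$ is merely a small bounded perturbation for $|t|$ small, which cannot alter the Morse index on $V^\perp$.
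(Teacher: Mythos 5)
Your proposal is correct and follows essentially the same route as the paper, which states this corollary as the analogue of [Lu8, Corollary~4.3]: one reduces to Theorem~\ref{th:A.15} by verifying the Morse-index jump condition, using the semi-definiteness of $\widehat{\mathcal{L}}''(0)$ together with the isolatedness of $\lambda^\ast$ to show that $\widehat{\mathcal{L}}''(0)$ is definite on $H^0_{\lambda^\ast}$ and hence that $m^-(\mathcal{L}_\lambda,0)$ equals $\mu_{\lambda^\ast}$ on one side and $\mu_{\lambda^\ast}+\nu_{\lambda^\ast}$ on the other. Your Schur-complement bookkeeping of the small eigenvalues, including the treatment of the exceptional case $\lambda^\ast=0$, is a sound implementation of that step.
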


\begin{corollary}\label{cor:Bi.2.4.2}
Let $\mathcal{L}\in C^1(U,\mathbb{R})$ (resp.  $\widehat{\mathcal{L}}\in C^1(U,\mathbb{R})$) satisfy
 Hypothesis~\ref{hyp:A.4} with $X=H$ (resp.  Hypothesis~\ref{hyp:A.14}). Suppose that the following two conditions
  satisfied:
 \begin{enumerate}
\item[\rm (a)] $\mathcal{L}''(0)$ is invertible
and $\lambda^\ast=\lambda_{k_0}$ is an eigenvalue of (\ref{e:Bi.2.7.4}).
 \item[\rm (b)] $\mathcal{L}''(0)\widehat{\mathcal{L}}''(0)=\widehat{\mathcal{L}}''(0)\mathcal{L}''(0)$ (so
 each $H_k$  is an invariant subspace of $\mathcal{L}''(0)$),   and $\mathcal{L}''(0)$
 is either positive  or negative on $H_{k_0}$.
 \end{enumerate}
 Assume that $H$ is equipped with a continuous action of  a compact Lie group $G$ via Hilbert space isomorphism on $H$
  such that  $U$ and $\mathcal{L}, \widehat{\mathcal{L}}$ are $G$-invariant.
Then the conclusions of Theorem~\ref{th:A.15} hold true.
 Moreover, if ${\mathcal{L}}''(0)$ is positive definite, the condition (b) is unnecessary.
\end{corollary}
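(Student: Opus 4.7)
The plan is to verify the Morse-index hypothesis of Theorem~\ref{th:A.15} by simultaneous (or generalized) diagonalization, and then invoke that theorem. First, under assumption (a), since $\mathcal{L}''(0)$ is invertible and $\widehat{\mathcal{L}}''(0)$ is compact self-adjoint, the spectrum of (\ref{e:Bi.2.7.4}) is discrete with finite-dimensional eigenspaces, and $\lambda^\ast\neq 0$ (for $\mathcal{L}''(0)v=0$ would force $v=0$). By the commutativity in (b), each $H_k$ is invariant under both $\mathcal{L}''(0)$ and $\widehat{\mathcal{L}}''(0)$, and the same holds for $H_{k_0}^\perp$.

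On $H_{k_0}$ one has $\mathcal{L}''(0)v=\lambda^\ast\widehat{\mathcal{L}}''(0)v$, hence
\[
(\mathcal{L}''(0)-\lambda\widehat{\mathcal{L}}''(0))\bigr|_{H_{k_0}}=(\lambda^\ast-\lambda)\widehat{\mathcal{L}}''(0)\bigr|_{H_{k_0}}.
\]
The definiteness assumption on $\mathcal{L}''(0)|_{H_{k_0}}$ in (b), together with $\lambda^\ast\neq 0$, forces $\widehat{\mathcal{L}}''(0)|_{H_{k_0}}$ to be definite as well. Thus this quadratic form flips sign as $\lambda$ crosses $\lambda^\ast$, contributing a jump of exactly $\dim H_{k_0}=\nu_{\lambda^\ast}$ to the Morse index. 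On $H_{k_0}^\perp$, since $\lambda^\ast$ is isolated in the spectrum, $(\mathcal{L}''(0)-\lambda\widehat{\mathcal{L}}''(0))|_{H_{k_0}^\perp}$ is a Fredholm operator of index zero that remains invertible for $\lambda$ in a punctured neighborhood of $\lambda^\ast$, so by continuity in $\lambda$ its Morse index is locally constant. Combining the two pieces, $\mu_\lambda$ takes exactly the values $\mu_{\lambda^\ast}$ and $\mu_{\lambda^\ast}+\nu_{\lambda^\ast}$ on the two sides of $\lambda^\ast$ and $\nu_\lambda=0$ for $\lambda$ near but not equal to $\lambda^\ast$. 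The remaining hypotheses of Theorem~\ref{th:A.15} are contained verbatim in those of the corollary, so its conclusions follow.

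For the last claim that (b) becomes superfluous when $\mathcal{L}''(0)$ is positive definite, I would introduce the equivalent inner product $\langle u,v\rangle_\ast:=(\mathcal{L}''(0)u,v)_H$ on $H$ and set $K:=\mathcal{L}''(0)^{-1}\widehat{\mathcal{L}}''(0)$. A direct computation gives $\langle Kv,w\rangle_\ast=(\widehat{\mathcal{L}}''(0)v,w)_H=\langle v,Kw\rangle_\ast$, so $K$ is self-adjoint with respect to $\langle\cdot,\cdot\rangle_\ast$, and it is compact since $\widehat{\mathcal{L}}''(0)$ is. The spectral theorem then produces an orthogonal decomposition $H=\bigoplus_k H_k$ (w.r.t. $\langle\cdot,\cdot\rangle_\ast$), where $K$ acts on $H_k$ as $\lambda_k^{-1}I$. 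The quadratic form associated with $\mathcal{L}''(0)-\lambda\widehat{\mathcal{L}}''(0)$ can be rewritten as $v\mapsto\langle(I-\lambda K)v,v\rangle_\ast$, which restricted to $H_k$ equals $(1-\lambda/\lambda_k)\langle v,v\rangle_\ast$. This is definite of fixed sign away from $\lambda=\lambda_k$, so the Morse-index-jump argument of the previous paragraphs carries over without any commutativity or sign hypothesis on the pair $(\mathcal{L}''(0),\widehat{\mathcal{L}}''(0))$.

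The main obstacle is the last point: one must recognize that simultaneous diagonalization of $\mathcal{L}''(0)$ and $\widehat{\mathcal{L}}''(0)$ need not hold in the original inner product (they may genuinely fail to commute), and that the way out is to use the equivalent inner product induced by $\mathcal{L}''(0)$, in which the generalized eigenvalue problem becomes the eigenvalue problem for a compact self-adjoint $K$. Since the Morse index depends only on the quadratic form, this switch of inner product has no effect on the conclusion and dispenses with (b).
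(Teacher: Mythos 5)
Your proof is correct and takes the approach that the paper (by pointing to \cite[Corollary~4.4]{Lu8}) intends: reduce to verifying the Morse-index-jump hypothesis of Theorem~\ref{th:A.15} and then invoke that theorem. The only substantive thing to check is your splitting argument, and it goes through. Under (a)+(b): $\lambda^\ast\neq 0$ because $\mathcal{L}''(0)$ is invertible, $H_{k_0}$ is invariant under $\mathcal{L}''(0)$ by (b) and hence also under $\widehat{\mathcal{L}}''(0)=(\lambda^\ast)^{-1}\mathcal{L}''(0)$ on $H_{k_0}$, so both operators preserve the orthogonal decomposition $H=H_{k_0}\oplus H_{k_0}^\perp$; on $H_{k_0}$ the form equals $(\lambda^\ast-\lambda)\widehat{\mathcal{L}}''(0)|_{H_{k_0}}$, which is definite by (b) and flips sign through $\lambda^\ast$, while on $H_{k_0}^\perp$ invertibility for nearby $\lambda$ together with the positive-plus-compact structure (so finite Morse index) gives local constancy of the index. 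This gives exactly the jump by $\nu_{\lambda^\ast}=\dim H_{k_0}$.

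Your treatment of the ``moreover'' clause is the most interesting part and it is right: passing to the equivalent inner product $\langle u,v\rangle_\ast=(\mathcal{L}''(0)u,v)_H$ converts the generalized eigenvalue problem into the ordinary eigenvalue problem for the compact $\langle\cdot,\cdot\rangle_\ast$-self-adjoint operator $K=\mathcal{L}''(0)^{-1}\widehat{\mathcal{L}}''(0)$, whose spectral decomposition is automatically block-diagonal for $I-\lambda K$; since the Morse index depends only on the quadratic form and not on the inner product, the jump argument carries over with no commutativity assumption. Two small points worth keeping explicit if you write this up: (1) you should record that $H_{k_0}$ is the eigenspace of $K$ for eigenvalue $\lambda_{k_0}^{-1}$ so that $I-\lambda K$ indeed preserves $H_{k_0}$ and its $\langle\cdot,\cdot\rangle_\ast$-orthogonal complement; (2) the contribution of $\ker K$ (i.e., the complement of $\bigoplus_k H_k$) is $\langle v,v\rangle_\ast$, which is positive and $\lambda$-independent, hence adds nothing to the index and does not disturb local constancy.
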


\noindent{\bf 3.3}.
{\bf Improvements of \cite[Theorems~5.18,~5.19]{Lu8}.}

\begin{hypothesis}[\hbox{\cite[Hypothesis~2.20]{Lu7}}]\label{hyp:S.6.2}
{\rm {\bf (i)} Let $G$ be a compact
Lie group, and  $\mathcal{H}$  a $C^3$ Hilbert-Riemannian $G$-space
(that is, ${\mathcal{H}}$ is a $C^3$ Hilbert-Riemannian manifold equipped with
a $C^3$ action via  Riemannian isometries, see \cite{Was}).\\
 {\bf (ii)} The $C^1$ functional $\mathcal{ L}:\mathcal{H}\to\mathbb{R}$ is $G$-invariant, the gradient
 $\nabla\mathcal{L}:\mathcal{H}\to T\mathcal{H}$ is G\^ateaux differentiable
 (i.e., under any $C^3$ local chart the functional $\mathcal{L}$
 has a G\^ateaux differentiable gradient map), and $\mathcal{ O}$ is an isolated
 critical orbit which is a $C^3$ critical submanifold with  Morse index $\mu_\mathcal{O}$.}
\end{hypothesis}

 Under Hypothesis~\ref{hyp:S.6.2}
let $\pi:N\mathcal{ O}\to \mathcal{O}$ denote the normal bundle of $\mathcal{O}$. The
bundle is a $C^2$-Hilbert vector bundle over $\mathcal{O}$, and can
be considered as a subbundle of $T_\mathcal{O}{\mathcal{H}}$ via the
Riemannian metric $(\!(\cdot, \cdot)\!)$. The metric $(\!(\cdot, \cdot)\!)$
induces a natural $C^2$ orthogonal bundle
projection ${\bf \Pi}:T_{\mathcal{O}}\mathcal{H}\to N\mathcal{O}$. For $\varepsilon>0$,
the so-called normal disk bundle of radius $\varepsilon$ is denoted by
$N\mathcal{ O}(\varepsilon):=\{(x,v)\in N\mathcal{O}\,|\,\|v\|_{x}<\varepsilon\}$.
 If $\varepsilon>0$ is small enough  the exponential map $\exp$ gives a $C^2$-diffeomorphism
 $\digamma$ from  $N\mathcal{ O}(\varepsilon)$ onto an open
neighborhood of $\mathcal{ O}$ in ${\mathcal{H}}$, $\mathcal{
N}(\mathcal{ O},\varepsilon)$.
For $x\in\mathcal{ O}$, let  $\mathscr{L}_s(N\mathcal{O}_x)$ denote the space
of those operators $S\in \mathscr{L}(N\mathcal{ O}_x)$ which are self-adjoint
with respect to the inner product $(\!(\cdot, \cdot)\!)_x$, i.e.
$(\!(S_xu, v)\!)_x=(\!(u, S_xv)\!)_x$ for all $u, v\in N\mathcal{
O}_x$. Then we have a $C^2$ vector bundle $\mathscr{L}_s(N\mathcal{ O})\to
\mathcal{O}$ whose fiber at $x\in\mathcal{ O}$ is given by
$\mathscr{L}_s(N\mathcal{ O}_x)$.

\begin{hypothesis}[\hbox{\cite[Hypothesis~5.17]{Lu8}}]\label{hyp:Bi.3.19}
{\rm Under Hypothesis~\ref{hyp:S.6.2}, let for some $x_0\in\mathcal{O}$  the pair
$(\mathcal{L}\circ\exp|_{N\mathcal{O}(\varepsilon)_{x_0}},  N\mathcal{O}(\varepsilon)_{x_0})$
satisfy the corresponding conditions with Hypothesis~\ref{hyp:A.4} with $X=H=N\mathcal{O}(\varepsilon)_{x_0}$.
(For this goal we only need require that the pair $(\mathcal{L}\circ\exp_{x_0},  B_{T_{x_0}\mathcal{H}}(0,\varepsilon))$
satisfy the corresponding conditions with Hypothesis~\ref{hyp:A.4} with $X=H=T_{x_0}\mathcal{H}$
by \cite[Lemma~2.8]{Lu7}.) Let $\widehat{\mathcal{L}}\in C^1(\mathcal{H},\mathbb{R})$  be  $G$-invariant, have
a critical orbit $\mathcal{O}$, and also satisfy:
\begin{description}
\item[(i)] The gradient $\nabla(\widehat{\mathcal{L}}\circ\exp|_{B_{T_{x_0}\mathcal{H}}(0,\varepsilon)})$ is G\^ateaux differentiable, and its derivative
at any $u\in B_{T_{x_0}\mathcal{H}}(0,\varepsilon)$,
$$
d^2(\widehat{\mathcal{L}}\circ\exp|_{B_{T_{x_0}\mathcal{H}}(0,\varepsilon)})(u)\in\mathscr{L}_s(T_{x_0}\mathcal{H}),
$$
    is also a compact linear operator.
\item[(ii)]  $B_{T_{x_0}\mathcal{H}}(0,\varepsilon)\to \mathscr{L}_s(T_{x_0}\mathcal{H}),\;u\mapsto d^2(\widehat{\mathcal{L}}\circ\exp|_{B_{T_{x_0}\mathcal{H}}(0,\varepsilon)})(u)$
is continuous at $0\in T_{x_0}\mathcal{H}$.
(Thus the assumptions on $\mathcal{G}$ assure that the functionals
$\mathcal{L}_{\lambda}:=\mathcal{L}-\lambda\widehat{\mathcal{L}}$,
$\lambda\in\mathbb{R}$, also satisfy the conditions of
\cite[Theorems~2.21 and 2.22]{Lu7}.)
\end{description}}
\end{hypothesis}

Under Hypothesis~\ref{hyp:Bi.3.19}, we say $\mathcal{O}$ to be a  \textsf{bifurcation $G$-orbit
with parameter $\lambda^\ast$} of  the equation
\begin{equation}\label{e:Bi.3.11}
\mathcal{L}'(u)=\lambda\widehat{\mathcal{L}}'(u),\quad u\in \mathcal{H}
\end{equation}
if for any $\varepsilon>0$ and for any neighborhood $\mathscr{U}$ of $\mathcal{O}$ in $\mathcal{H}$
there exists a $G$-orbit of solutions  $\mathcal{O}'\ne \mathcal{O}$ in $\mathscr{U}$ of
(\ref{e:Bi.3.11}) with some $\lambda\in (\lambda^\ast-\varepsilon, \lambda^\ast+\varepsilon)$.
Equivalently, for some (and so any) fixed $x_0\in\mathcal{O}$ there exists a sequence $(\lambda_n, u_n)\subset (\lambda^\ast-\varepsilon, \lambda^\ast+\varepsilon)\times \mathcal{H}$ such that
\begin{equation}\label{e:Bi.3.11.1}
(\lambda_n, u_n)\to(\lambda^\ast, x_0),\quad \mathcal{L}'(u_n)=\lambda_n\widehat{\mathcal{L}}'(u_n)
\quad\hbox{and}\quad u_n\notin \mathcal{O}\quad\forall n.
\end{equation}

For any $x_0\in\mathcal{O}$, since  $\mathcal{S}_{x_0}:=\exp_{x_0}({N\mathcal{O}(\varepsilon)_{x_0}})$
is a $C^2$ slice for the action of $G$ on $\mathcal{H}$ (cf. \cite[page 1284]{Lu8})
a point $u\in {N\mathcal{O}(\varepsilon)_{x_0}}$ near $0_{x_0}\in {N\mathcal{O}(\varepsilon)_{x_0}}$
is a critical point of $\mathcal{L}_\lambda\circ\exp|_{N\mathcal{O}(\varepsilon)_{x_0}}$
if and only if $x:=\exp_{x_0}(u)$ is a critical point of  $\mathcal{L}_\lambda|_{\mathcal{S}_{x_0}}$.
Note that $d\mathcal{L}_\lambda(x)[\xi]=0\;\forall \xi\in T_{x}(G\cdot x)$ and
$T_x\mathcal{H}=T_{x}(G\cdot x)\oplus T_x\mathcal{S}_{x_0}$. We get that
$d\mathcal{L}_\lambda(x)=0$ with $x=\exp_{x_0}(u)$ if and only if
$d(\mathcal{L}_\lambda\circ\exp|_{N\mathcal{O}(\varepsilon)_{x_0}})(u)=0$.
Moreover, if $u_i\in N\mathcal{O}(\varepsilon)_{x_0}$, $i=1,2$, satisfies
$\exp_{x_0}(u_2)=g\exp_{x_0}(u_1)=\exp_{gx_0}(gu_1)$ for some $g\in G$,
since $\exp|_{N\mathcal{O}(\varepsilon)_{x_0}}$ is an embedding into $\mathcal{H}$,
we have $gx_0=x_0$ and $u_2=gu_1$, that is, $u_1$ and $u_2$ belongs to the same $G_{x_0}$-orbit.
Hence different critical $G_{x_0}$-orbits of $\mathcal{L}_\lambda\circ\exp|_{N\mathcal{O}(\varepsilon)_{x_0}}$
give rise to different critical $G$-orbits of $\mathcal{L}_\lambda$.

Write $\mathcal{L}''(x_0):=d^2(\mathcal{L}\circ\exp|_{B_{T_{x_0}\mathcal{H}}(0,\varepsilon)})(0)$, $\widehat{\mathcal{L}}''(x_0):=d^2(\widehat{\mathcal{L}}\circ\exp|_{B_{T_{x_0}\mathcal{H}}(0,\varepsilon)})(0)$
and $\mathcal{L}''_\lambda(x_0):=d^2(\mathcal{L}_\lambda\circ\exp|_{B_{T_{x_0}\mathcal{H}}(0,\varepsilon)})(0)$
for all $\lambda\in\mathbb{R}$.  Since the orthogonal complementary $N\mathcal{O}_{x_0}$  of $T_{x_0}\mathcal{O}$ in $T_{x_0}\mathcal{H}$
  is an invariant subspace of each $\mathcal{L}''_\lambda(x_0)$, we see that
    $\mathcal{L}''_\lambda(x_0)$ (resp. $\mathcal{L}''(x_0)$, $\widehat{\mathcal{L}}''(x_0)$)
  restricts to a self-adjoint operator from  $N\mathcal{O}_{x_0}$ to itself, denoted by
  $\mathcal{L}''_\lambda(x_0)^\bot$  (resp.  $\mathcal{L}''(x_0)^\bot$,  $\widehat{\mathcal{L}}''(x_0)^\bot$).
 Actually, $\mathcal{L}''_\lambda(x_0)^\bot=d^2(\mathcal{L}_\lambda\circ\exp|_{N\mathcal{O}(\varepsilon)_{x_0}})(0)$ and
$$
\mathcal{L}''(x_0)^\bot=d^2(\mathcal{L}\circ\exp|_{N\mathcal{O}(\varepsilon)_{x_0}})(0),\quad
\widehat{\mathcal{L}}''(x_0)^\bot=d^2(\widehat{\mathcal{L}}\circ\exp|_{N\mathcal{O}(\varepsilon)_{x_0}})(0).
$$
Note that the induced $G_{x_0}$-actions on $T_{x_0}\mathcal{H}$ and $N\mathcal{O}_{x_0}$ are $C^2$
actions via Hilbert space isomorphisms on $T_{x_0}\mathcal{H}$ and $N\mathcal{O}_{x_0}$, respectively.
Applying Corollaries~\ref{cor:Bi.2.4.1},~\ref{cor:Bi.2.4.2} to
$(\mathcal{L}\circ\exp|_{N\mathcal{O}(\varepsilon)_{x_0}}, \widehat{\mathcal{L}}\circ\exp|_{N\mathcal{O}(\varepsilon)_{x_0}},
N\mathcal{O}(\varepsilon)_{x_0})$, respectively,
we get the following improvements of Theorems~5.18 and 5.19 in \cite{Lu8}.

\begin{theorem}\label{th:Bi.3.20.1}
Under Hypothesis~\ref{hyp:Bi.3.19},
suppose that  $\lambda^\ast\in\mathbb{R}$ is an isolated eigenvalue of
\begin{equation}\label{e:Bi.3.12}
\mathcal{L}''(x_0)^\bot v-\lambda\widehat{\mathcal{L}}''(x_0)^\bot v=0,\quad v\in N\mathcal{O}_{x_0},
\end{equation}
and  that $\widehat{\mathcal{L}}''(x_0)^\bot$ is either semi-positive or semi-negative.
 Then $\mathcal{O}$ is a  bifurcation $G$-orbit with parameter $\lambda^\ast$ of  the equation
(\ref{e:Bi.3.11}), and one of the following alternatives occurs:
\begin{description}
\item[(i)] $\mathcal{O}$ is not an isolated critical orbit of $\mathcal{L}_{\lambda^\ast}$.

\item[(ii)]  For every $\lambda\in\mathbb{R}$ near $\lambda^\ast$ there is a critical point
$u_\lambda\notin\mathcal{O}$ of $\mathcal{L}_{\lambda}$ converging to $x_0$ as $\lambda\to\lambda^\ast$.

\item[(iii)]  For any given $G$-invariant neighborhood $\mathcal{N}$ of $\mathcal{O}$ in $\mathcal{H}$,
there is an one-sided  neighborhood $\Lambda$ of $\lambda^\ast$ in $\mathbb{R}$ such that
for any $\lambda\in\Lambda\setminus\{\lambda^\ast\}$,
$\mathcal{L}_{\lambda}$ has  at least two critical $G$-orbit in $\mathcal{N}$  which
are different from $\mathcal{O}$, provided that any nontrivial orbit $\mathcal{O}^\ast$ near $0$ of the action of $G_{x_0}$ on
$$
X:={\rm Ker}(\mathcal{L}''(x_0)^\bot -\lambda^\ast\widehat{\mathcal{L}}''(x_0)^\bot)
$$
has the Euler-Poincar\'e characteristic
$\chi(\mathcal{O}^\ast)\ne 1-(-1)^{\dim X}$. Moreover, for $\lambda\in\Lambda\setminus\{\lambda^\ast\}$, if $\mathcal{L}_{\lambda}$
 has only finitely many critical $G$-orbits in $\mathcal{N}$,
then it has at least two critical $G$-orbit in $\mathcal{N}$  which
are different from $\mathcal{O}$ and have distinct energy,
 provided that  $\dim X>1$ and any nontrivial orbit $\mathcal{O}^\ast$ near $0$
  of the $G_{x_0}$-action on $X$ satisfies one of the following conditions:
  \begin{description}
\item[iii-1)]  $\dim\mathcal{O}^\ast=0$ or  $1\le\dim\mathcal{O}^\ast\le \dim X-2$.
\item[iii-2)]  $1\le \dim \mathcal{O}^\ast=\dim X-1$, either $\mathcal{O}^\ast$ is non-connected or
$\mathcal{O}^\ast$ is connected and $H_r(\mathcal{O}^\ast, \mathbb{Z}_2)\ne H_r(S^{{\dim X}-1}, \mathbb{Z}_2)$ for some $0\le r\le \dim X-1$.
\end{description}
\end{description}
\end{theorem}

\begin{theorem}\label{th:Bi.3.20.2}
Under Hypothesis~\ref{hyp:Bi.3.19},  the conclusions of Theorem~\ref{th:Bi.3.20.1} hold true
if the assumption ``$\widehat{\mathcal{L}}''(x_0)^\bot$ is either semi-positive or semi-negative''
are replaced by the following: \\
{\bf I)}  $\mathcal{L}''(x_0)^\bot$ is invertible.\\
{\bf II)} $\lambda^\ast=\lambda_{k_0}$ is an eigenvalue of (\ref{e:Bi.3.12}) as above.\\
{\bf III)} One of the following two conditions is satisfied:
 \begin{enumerate}
\item[\rm (a)] $\mathcal{L}''(x_0)^\bot$ is positive;
 \item[\rm (b)] each $N\mathcal{O}_{x_0}^k={\rm Ker}(\mathcal{L}''(x_0)^\bot-\lambda_k\widehat{\mathcal{L}}''(x_0)^\bot)$
 with $k\in\mathbb{N}$
 is an invariant subspace of $\mathcal{L}''(x_0)^\bot$  (e.g. these are true if $\mathcal{L}''(x_0)^\bot$
 commutes with $\widehat{\mathcal{L}}''(x_0)^\bot$), and $\mathcal{L}''(x_0)^\bot$
 is either positive definite or negative one on $N\mathcal{O}_{x_0}^{k_0}$.
 \end{enumerate}
\end{theorem}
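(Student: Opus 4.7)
The plan is to reduce the problem from $\mathcal{H}$ to the slice $\mathcal{S}_{x_0}=\exp_{x_0}(N\mathcal{O}(\varepsilon)_{x_0})$ and then apply Corollary~\ref{cor:Bi.2.4.2}. Fix $x_0\in\mathcal{O}$ and work with the pullback pair
$$
\bigl(\mathfrak{L}:=\mathcal{L}\circ\exp|_{N\mathcal{O}(\varepsilon)_{x_0}},\;\widehat{\mathfrak{L}}:=\widehat{\mathcal{L}}\circ\exp|_{N\mathcal{O}(\varepsilon)_{x_0}}\bigr)
$$
on $U:=N\mathcal{O}(\varepsilon)_{x_0}$, viewed as an open neighborhood of $0$ in the Hilbert space $H:=N\mathcal{O}_{x_0}$. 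By Hypothesis~\ref{hyp:Bi.3.19} together with \cite[Lemma~2.8]{Lu7}, the pair $\mathfrak{L}$ satisfies Hypothesis~\ref{hyp:A.4} with $X=H$, while $\widehat{\mathfrak{L}}$ satisfies Hypothesis~\ref{hyp:A.14}; indeed $\mathfrak{L}''(0)=\mathcal{L}''(x_0)^\bot$ and $\widehat{\mathfrak{L}}''(0)=\widehat{\mathcal{L}}''(x_0)^\bot$. Thus (\ref{e:Bi.3.12}) is precisely equation (\ref{e:Bi.2.7.4}) for this pair, so $\lambda^\ast$ is an isolated eigenvalue in the reduced problem.

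Next I verify the hypotheses (a) and (b) of Corollary~\ref{cor:Bi.2.4.2}. Assumption I) of Theorem~\ref{th:Bi.3.20.2} gives invertibility of $\mathfrak{L}''(0)$, and II) identifies $\lambda^\ast=\lambda_{k_0}$. If III)(a) holds, then $\mathfrak{L}''(0)$ is positive definite and the last sentence of Corollary~\ref{cor:Bi.2.4.2} applies with no commutativity needed. If instead III)(b) holds, the invariant subspace decomposition $N\mathcal{O}_{x_0}^k$ and the sign hypothesis on $\mathfrak{L}''(0)|_{N\mathcal{O}_{x_0}^{k_0}}$ exactly match condition (b) of Corollary~\ref{cor:Bi.2.4.2}. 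The compact Lie group involved is $G_{x_0}$, which acts continuously on $H$ via Hilbert space isomorphisms (in fact $C^2$-isometrically, as noted in the paragraph preceding the theorem), and both $\mathfrak{L}$ and $\widehat{\mathfrak{L}}$ are $G_{x_0}$-invariant because $\exp$ is $G_{x_0}$-equivariant on $N\mathcal{O}_{x_0}$ and $\mathcal{L},\widehat{\mathcal{L}}$ are $G$-invariant. Hence Corollary~\ref{cor:Bi.2.4.2} yields the three alternatives (i)--(iii) of Theorem~\ref{th:A.15} for the pair $(\mathfrak{L},\widehat{\mathfrak{L}})$ on $U$, under the $G_{x_0}$-action.

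It remains to translate these back to $\mathcal{H}$. Because $\mathcal{S}_{x_0}$ is a $C^2$ slice and $T_x\mathcal{H}=T_x(G\cdot x)\oplus T_x\mathcal{S}_{x_0}$ with $d\mathcal{L}_\lambda(x)$ vanishing on the orbit direction, a point $u\in U$ is a critical point of $\mathfrak{L}-\lambda\widehat{\mathfrak{L}}$ if and only if $\exp_{x_0}(u)$ is a critical point of $\mathcal{L}_\lambda$ near $x_0$; moreover distinct $G_{x_0}$-orbits in $U$ map to distinct $G$-orbits in $\mathcal{H}$ via $\exp_{x_0}$, since $\exp|_U$ is an embedding and the slice separates $G$-orbits. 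Under this correspondence alternative (i) of Theorem~\ref{th:A.15} gives that $0$ is not isolated as a critical point of $\mathfrak{L}-\lambda^\ast\widehat{\mathfrak{L}}$, hence $\mathcal{O}$ is not an isolated critical orbit of $\mathcal{L}_{\lambda^\ast}$; alternative (ii) yields a nontrivial $G_{x_0}$-orbit near $0$ for each $\lambda$ near $\lambda^\ast$, and applying $\exp_{x_0}$ gives a critical point $u_\lambda\notin\mathcal{O}$ of $\mathcal{L}_\lambda$ with $u_\lambda\to x_0$; alternative (iii) gives, under conditions 1)--3) transcribed verbatim as IV), the existence in any small $G$-invariant neighborhood $\mathcal{N}$ of $\mathcal{O}$ of either infinitely many nontrivial critical $G$-orbits of $\mathcal{L}_\lambda$ or at least two with distinct critical values. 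The bifurcation assertion for $\mathcal{O}$ (in the sense of (\ref{e:Bi.3.11.1})) follows from any of (i)--(iii).

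The main obstacle is the careful bookkeeping between $G$-orbits on $\mathcal{H}$ and $G_{x_0}$-orbits on the normal slice, together with checking the sign/commutation hypothesis in III)(b) produces precisely the $\mathcal{L}''(0)$-positivity (or negativity) on the eigenspace $H^0_{\lambda^\ast}$ needed to feed Corollary~\ref{cor:Bi.2.4.2}; everything else is the transcription of its conclusion through the slice diffeomorphism.
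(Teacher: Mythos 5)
Your proposal is correct and follows essentially the same route as the paper: the paper proves Theorem~\ref{th:Bi.3.20.2} precisely by applying Corollary~\ref{cor:Bi.2.4.2} to the pulled-back pair $(\mathcal{L}\circ\exp|_{N\mathcal{O}(\varepsilon)_{x_0}}, \widehat{\mathcal{L}}\circ\exp|_{N\mathcal{O}(\varepsilon)_{x_0}})$ on $N\mathcal{O}(\varepsilon)_{x_0}$ with the isotropy group $G_{x_0}$, and then transfers the conclusions back through the slice correspondence between critical $G_{x_0}$-orbits on the normal slice and critical $G$-orbits of $\mathcal{L}_\lambda$ near $\mathcal{O}$, exactly as you describe.
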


\noindent{\bf Acknowledgments}\quad
 The author is deeply grateful to
 the anonymous referee for pointing out some questions and improved suggestions.\\




\renewcommand{\refname}{REFERENCES}

\medskip

\begin{tabular}{l}
 School of Mathematical Sciences, Beijing Normal University\\
 Laboratory of Mathematics and Complex Systems, Ministry of Education\\
 Beijing 100875, The People's Republic of China\\
 E-mail address: gclu@bnu.edu.cn\\
\end{tabular}

\end{document}